\DeclareMathOperator{\ii}{Im}
\theoremstyle{plain}
\newtheorem{thm}{\protect\theoremname}
\theoremstyle{plain}
\newtheorem{rem}[thm]{\protect\remarkname}
\theoremstyle{plain}
\newtheorem{prop}[thm]{\protect\propositionname}
\newcounter{example}
\newenvironment{example}[1][]{\refstepcounter{example}\par\medskip
   \noindent \textbf{Example~\theexample. #1} \rmfamily}{\medskip}
\providecommand{\propositionname}{Proposition}
\providecommand{\remarkname}{Remark}
\providecommand{\theoremname}{Theorem}
\date{}
\title{Neumann series of Bessel functions in direct and inverse spherically symmetric transmission eigenvalue problems}
\author{%
  Vladislav V. Kravchenko\textsuperscript{1}\thanks{vkravchenko@math.cinvestav.mx}%
  \and
  L. Estefania Murcia-Lozano\textsuperscript{2}\thanks{mursiia@sfedu.ru}%
  \and
  Nikolaos Pallikarakis\textsuperscript{3}\thanks{npall@central.ntua.gr}%
}
\date{%
  \small
  \textsuperscript{1}Departamento de Matemáticas, Cinvestav, Unidad Querétaro,\\
    Libramiento Norponiente 2000, Fracc. Real de Juriquilla, Querétaro, Qro., 76230 México\\[1ex]
  \textsuperscript{2}Regional Mathematical Center, Southern Federal University, Rostov-on-Don 344090, Russia\\[1ex]
  \textsuperscript{3}Department of Mathematics, National Technical University of Athens,\\
    Zografou Campus, Athens, 15780, Greece\\[2ex]
}
\begin{document}
\maketitle
\let\thefootnote\relax\footnotetext{Corresponding author: \texttt{npall@central.ntua.gr}}

\begin{abstract}
The transmission eigenvalue problem (TEP) plays a central role in inverse scattering theory. Despite substantial theoretical progress, the numerical solution of direct and inverse TEP in spherically symmetric domains with variable refractive index— covering real and complex eigenvalues—remains challenging. This study introduces a novel Neumann Series of Bessel Functions (NSBF) methodology to address this challenge. After reformulating the TEP as a Sturm–Liouville equation via a Liouville transformation, we expand its characteristic function in an NSBF whose coefficients are computed by simple recursive integration. In the direct problem, eigenvalues—real or complex—are found by root‑finding on a truncated NSBF partial sum, yielding high accuracy with a few coefficients, as demonstrated with various examples.  For the inverse problem, we develop a two-step approach: first, recovering the transformed interval length $\delta$ from spectral data via a new NSBF-based algorithm, and second, reconstructing the refractive index $n(r)$ by solving a linear system for the first NSBF coefficients. A spectrum completion technique is also implemented to complete the spectrum and solve the corresponding inverse problem when eigenvalue data is limited. Numerical examples confirm the method’s robustness and accuracy across a wide range of refractive indices, with no a priori assumptions on $\delta$ or the sign of the contrast $1-n(r)$.

\end{abstract}

\textbf{keywords:} transmission eigenvalues, spherically symmetric domain, direct spectral problem, inverse spectral problem, Neumann series of Bessel functions, spectrum completion

\section{Introduction}
\label{sec_intro} 

The transmission eigenvalue problem (TEP) is a fundamental non-self-adjoint boundary value problem that arises in the context of inverse scattering theory for inhomogeneous acoustic, electromagnetic, and elastic media. First introduced by Colton, Monk, and Kirsch \cite{CM,Ki}, this problem has been extensively studied, both theoretically and computationally, due to its applications in reconstructing the material properties and support of a medium and its relation to non-scattering wave phenomena. We refer to the following books, monographs and surveys for a detailed review of the subject \cite{CCqual, CCH, CH, CKbook, KBook, LiuSur, Pallreview}. 

More specifically, the TEP is defined as a boundary value problem involving a coupled set of equations with accompanying transmission boundary conditions. In the case of acoustic scattering for an isotropic and inhomogeneous medium, the interior TEP is given by the system
\begin{eqnarray}
&\Delta w+k^2n(x)w=0&\ \textrm{in}\ D, \label{trD1}\\
&\Delta v+k^2v=0&\ \textrm{in}\ D, \label{trD2}\\
&w=v&\ \textrm{on}\ \partial D, \label{trD3}\\
&\frac{\partial w}{\partial \nu}=\frac{\partial v}{\partial \nu}&\ \textrm{on}\ \partial D.  \label{trD4}
\end{eqnarray} 
The domain $D\subset \mathbb{R}^n$ is assumed to be simply connected with a Lipschitz boundary $\partial D$ and $\nu$ is the outward unit normal. The refractive index $n(x):=c_0^2/c^2(x)$ is defined as the ratio of the square of the reference sound speed to the square of the local sound speed in the medium, and is assumed to be equal to one outside the inhomogeneous region. The complex values of $k$ corresponding to non-trivial solutions of (\ref{trD1})-(\ref{trD4}) are the transmission eigenvalues and $(w,v)$ the eigenfunctions.  The direct TEP consists of finding $\{k;(w,v)\}$ for given $n(x)$ while the inverse TEP is to recover the unknown refractive index from the knowledge of the spectrum.

Research on the interior transmission problem has primarily focused on the discreteness of the spectrum, which is crucial for sampling methods used in reconstructing the support of inhomogeneous media \cite{CKP89,CKi}. In \cite{McP}, it was first shown that transmission eigenvalues provide information about the refractive index, a result later extended to material characterization and non-destructive testing \cite{ccc, harris2014inverse}. The fact that real transmission eigenvalues can be measured from scattering data \cite{CCHdeter} has made their use particularly important. However, the existence of an infinite, discrete spectrum remained unresolved for many years until \cite{CGH}, due to the lack of a standard theory for non-self-adjoint eigenvalue problems. 

The special case of the TEP for spherically symmetric domains has been of particular interest in the research community. The use of spherical coordinates simplifies the study, enabling the application of analytical methods, integral equations, and tools from complex analysis to gain a deeper understanding of the subject.  For a review of the state-of-the-art results we refer to \cite{Pallreview} and the references therein, for the main findings from the late 80s to the present.  

Significant research has been devoted to numerically solving the direct eigenvalue problem for general domains, see e.g. \cite[Section 6]{SZ}, which is an actively evolving subject. In contrast, the numerical solution of the inverse eigenvalue problem for general refractive indices is less studied, primarily because of the inherent complexity of the problem \cite{GP1,GH}. When restricted to the spherically symmetric problem, computational results have been presented in various studies. In domains with cylindrical or spherical symmetry, transmission eigenvalues can be computed analytically using separation of variables for constant or piecewise constant refractive indices, as e.g. in examples presented in \cite{CMSn, Kn}. In some cases of variable refractive indices, eigenvalues can also be derived analytically \cite{CL2,CLM}. Additionally, reconstruction algorithms for the inverse spherically symmetric problem are included in \cite{AP,MPS,MSS,WZS,WW20,XY,XYB}, with some of them also presenting numerical examples for specific cases. Despite these advancements, a general numerical method for the direct and inverse problem with variable spherically symmetric refractive indices (covering both real and complex eigenvalues) remains open. The present study aims to fill this gap by applying a novel Neumann Series of Bessel Functions (NSBF) methodology. 

NSBF representations for solutions of Sturm-Liouville equations were obtained in \cite{KNT2017AMC, KT2018Calcolo} as a corollary of Fourier-Legendre series expansions of transmutation (transformation) operator integral kernels. The use of NSBF for solving direct and inverse spectral problems (see, e.g., \cite{KravSC, KKC, CKK2024MMAS, Kr2024LJM, Kr2025JIIP,KravchenkoMurcia2025}) is due to their several important features: uniform convergence with respect to the spectral parameter in any strip of the complex plane parallel to the real axis (see Theorem \ref{Th NSBF old} below); the first coefficient of the series is sufficient for recovering the Sturm-Liouville equation; simple and efficient criteria for controlling accuracy of the approximation by partial sums.  

In this work, NSBF representations simplify solving the direct TEP by reducing it to computing NSBF coefficients using a recurrent integration procedure. The solution then involves locating the zeros of the resulting NSBF partial sum. The solution of the inverse TEP consists of two steps: the recovery of the values of the NSBF coefficients at the endpoint of the interval from the transmission eigenvalues, and the recovery of the refractive index from the first NSBF coefficients obtained by solving a system of linear algebraic equations. Since the inverse problem involves the Liouville transformation, which relates the Sturm-Liouville equation in string form to the Schrödinger equation, we must address an important issue that arises. This concerns computing the unknown length of the interval after the Liouville change of variable, which will be denoted as $\delta$ throughout this text. This problem is not specific for TEP and, on the contrary, naturally arises in different applications, whenever the Liouville transformation is involved \cite{Erd, Glad}. One of the contributions of the present study is a new approach for recovering this $\delta$, based on the properties of the NSBF representations. Additionally, we explore the possibility of the spectrum completion, that is to compute higher-order transmission eigenvalues, from an initial relatively small set of real and/or complex eigenvalues. This develops the idea of \cite{KravSC}, adapted to the TEP. These considerations lead to the main objective of this paper, which is to propose a new method for the approximate solution of direct and inverse transmission eigenvalue problems in the spherically symmetric case.

The structure of this manuscript is organized as follows. In Section \ref{sec_nsbf}, we introduce the key theoretical concepts of the NSBF method, with a particular focus on Sturm-Liouville-type problems. Section \ref{sec_ss_tep} presents the main definitions and formulations for the spherically symmetric TEP, including a new Liouville transformation and the relevant characteristic functions. In Section \ref{sec_comp_nsbf}, we implement our NSBF computational method to solve the TEP.  This section includes algorithms for the direct problem in Subsection \ref{sec_comp_nsbf_dir} and for the inverse problem in Subsection \ref{sec_comp_nsbf_inv}. Additionally, we develop a method to recover the transformed interval length $\delta$  from spectral data and we also introduce a spectrum completion methodology, both leveraging the NSBF representations. Section \ref{sec_exampl} provides several examples that demonstrate the applicability and effectiveness of our methods, with specific examples for the direct problem in Subsection \ref{sec_exampl_dir} and the inverse problem in Subsection \ref{sec_exampl_inv}. Finally, we conclude with a discussion and summary in Section \ref{sec_fin}.

\section{Neumann series of Bessel functions representations for Sturm-Liouville problems} \label{sec_nsbf}

In this section, for the reader's convenience and to maintain the manuscript's conciseness, we present only the key Neumann Series of Bessel Functions results relevant to our study. For a more detailed review of the subject, we refer to the comprehensive bibliography, such as \cite{krBook2020}.

Let $q\in\mathcal{L}_{2}(0,L)$ be a complex-valued potential and $L>0$. Consider
the Sturm-Liouville equation
\begin{equation}
-y''+q(x)y=\rho^{2}y,\,\,0<x<L,\label{eq:PrincipalEq}
\end{equation}
where $\rho\in\mathbb{C}$ is the spectral parameter. By $S\left(\rho,x\right),\ \phi(\rho,x)$  and $T\left(\rho,x\right)$ we denote the solutions
of (\ref{eq:PrincipalEq}) satisfying the initial conditions 
\begin{align*}
S(\rho,0) & =0,\,S'(\rho,0)=1, \\
T(\rho,L) & =0,\,T'(\rho,L)=1,\\
\phi(\rho,0) & =1,\,\phi'(\rho,0)=0.\label{eq:h}
\end{align*}
These solutions satisfy the identity 
\begin{equation}
T(\rho,x)=\phi(\rho,L)S(\rho,x)-\phi(\rho,x)S(\rho,L).\label{eq:Identity}
\end{equation}

\begin{thm} (\cite[Theorem 4.1]{KNT2017AMC})
\label{Th NSBF old}
Let $q\in\mathcal{L}_{2}(0,L)$. The solutions $S\left(\rho,x\right)$
and $\phi(\rho,x)$ admit the following series representation
\begin{align*}
S(\rho,x) & =\frac{\sin(\rho x)}{\rho}+\frac{1}{\rho}\sum_{n=0}^{\infty} s_{n}(x)j_{2n+1}(\rho x),\\
\phi(\rho,x) & =\cos(\rho x)+\sum_{n=0}^{\infty} g_{n}(x)j_{2n}(\rho x),
\end{align*}
where $j_{n}(z)$ stands for the spherical Bessel function of order
$n$, see, e.g., \cite{abromowitz1972handbook}.

The series converge pointwise with respect to $x$ for $x\in[0,L]$. Additionally, for
every $x\in[0,L]$ the series converge uniformly in any strip of the complex
plane of the variable $\rho$, parallel to the real axis. In particular the remainders of their partial sums
\begin{align}
S_{N}(\rho,x) & =\frac{\sin(\rho x)}{\rho}+\frac{1}{\rho}\sum_{n=0}^{N-1}s_{n}(x)j_{2n+1}(\rho x), \label{eq:STrn} \\
\phi_{N}(\rho,x) & =\cos(\rho x)+\sum_{n=0}^{N-1}g_{n}(x)j_{2n}(\rho x). \label{eq:PhiTrun} 
\end{align}
admit the estimates
\[
\left|\rho S(\rho,x)-\rho S_{N}(\rho,x)\right|\leq\frac{\tilde{\varepsilon}_{N}(x)\sinh\left(Cx\right)}{C}\,\,\text{and}\,\,\text{\ensuremath{\left|\phi(\rho,x)-\phi_{N}(\rho,x)\right|\leq\frac{\tilde{\varepsilon}_{N}(x)\sinh\left(Cx\right)}{C}},}
\]
for all $\rho$ belonging
to the strip $|\ii\rho|\leq C$, $C> 0$, where $\tilde{\varepsilon}_{N}(x)$ is a positive function tending to zero when $N\rightarrow\infty$. 
\end{thm}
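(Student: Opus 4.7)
The plan is to derive the NSBF representations from the classical transmutation (transformation) operator approach for (\ref{eq:PrincipalEq}). Since $q\in\mathcal{L}_{2}(0,L)$, Sturm--Liouville theory supplies Volterra-type integral operators whose kernels $K_{\phi}(x,t)$ and $K_{S}(x,t)$ are the unique solutions of Goursat problems driven by $q$ and satisfy
\begin{equation*}
\phi(\rho,x)=\cos(\rho x)+\int_{0}^{x}K_{\phi}(x,t)\cos(\rho t)\,dt,\qquad \rho S(\rho,x)=\sin(\rho x)+\int_{0}^{x}K_{S}(x,t)\sin(\rho t)\,dt,
\end{equation*}
with $K_{\phi}(x,\cdot),K_{S}(x,\cdot)\in\mathcal{L}_{2}(0,x)$ for every $x\in[0,L]$. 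This reduces the theorem to expanding each kernel in a suitable complete system that produces spherical Bessel functions upon integration against $\cos(\rho t)$ or $\sin(\rho t)$.

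Next, I would extend $K_{\phi}(x,\cdot)$ as an even and $K_{S}(x,\cdot)$ as an odd function on $[-x,x]$ and expand each in the Fourier--Legendre system $\{P_{n}(t/x)\}_{n\geq 0}$. The chosen parities leave only the even-indexed terms $P_{2n}(t/x)$ in the $\phi$-expansion and only the odd-indexed terms $P_{2n+1}(t/x)$ in the $S$-expansion. Inserting these expansions into the Volterra representations, interchanging sum and integral, and invoking the classical identity
\begin{equation*}
\int_{-1}^{1}P_{m}(\xi)e^{iz\xi}\,d\xi=2i^{m}j_{m}(z),
\end{equation*}
combined with parity considerations, gives
\begin{equation*}
\int_{0}^{x}P_{2n}(t/x)\cos(\rho t)\,dt=(-1)^{n}x\,j_{2n}(\rho x),\qquad \int_{0}^{x}P_{2n+1}(t/x)\sin(\rho t)\,dt=(-1)^{n}x\,j_{2n+1}(\rho x).
\end{equation*}
A relabeling of the Fourier--Legendre coefficients as $g_{n}(x)$ and $s_{n}(x)$ (absorbing the signs and the factor $x$) then produces the announced NSBF series.

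For uniform convergence in strips and the quantitative remainder bound, I would pair via Cauchy--Schwarz the $\ell^{2}$-tail of the Fourier--Legendre coefficients (which furnishes the $\tilde\varepsilon_{N}(x)\to 0$ factor, as a direct consequence of Parseval on $\mathcal{L}_{2}(-x,x)$) with the complementary $\ell^{2}$-bound
\begin{equation*}
\sum_{m\geq 0}(2m+1)|j_{m}(\rho x)|^{2}=\tfrac{1}{2}\int_{-1}^{1}|e^{i\rho xt}|^{2}\,dt,
\end{equation*}
which follows from Parseval applied to the Fourier--Legendre expansion of $e^{i\rho xt}$ on $(-1,1)$ and is uniformly bounded on the strip $|\ii\rho|\leq C$. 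To match the exact functional dependence $\sinh(Cx)/C$ in the theorem, I would alternatively write $R_{N}:=\phi-\phi_{N}$ (and analogously for $S-S_{N}$) as a solution of the same Sturm--Liouville equation with a controllable right-hand side arising from truncation, and invoke a Volterra/Gronwall argument whose $q\equiv 0$ Green function contributes precisely the $\sinh(Cx)/C$ factor in the stated strip.

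The main obstacle is reconciling the two natural $\ell^{2}$-weights so that the error splits cleanly as an $x$-dependent factor $\tilde\varepsilon_{N}(x)$ tending to zero and a $\rho$-dependent factor $\sinh(Cx)/C$. The $\ell^{2}$-pairing above carries weight $(2m+1)$ on the Bessel side, whereas Parseval for the Legendre expansion weights its coefficients by $2/(2m+1)$, so bridging the mismatch requires either additional regularity of the kernels beyond $\mathcal{L}_{2}$ (inherent because of the smoothing by $q\in\mathcal{L}_{2}$ in the Goursat problem) or the Volterra-based route sketched above, which bypasses the $\ell^{2}$-pairing and extracts the $\sinh(Cx)/C$ factor from the fundamental solution of the unperturbed equation.
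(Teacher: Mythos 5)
This theorem is quoted verbatim from \cite[Theorem 4.1]{KNT2017AMC} and the paper supplies no proof of its own, so your derivation---transmutation (Volterra) kernels, Fourier--Legendre expansion with the appropriate even/odd parity, the identity $\int_{-1}^{1}P_{m}(\xi)e^{iz\xi}\,d\xi=2i^{m}j_{m}(z)$, and a Parseval/Cauchy--Schwarz estimate for the remainder---is precisely the argument of that reference, which the present paper itself describes as obtaining the NSBF representations ``as a corollary of Fourier--Legendre series expansions of transmutation operator integral kernels.'' The only slip is the closing worry about a weight ``mismatch'': the weight $(2m+1)$ on the Bessel side and $2/(2m+1)$ on the Legendre side are exactly complementary under Cauchy--Schwarz (the pairing is just Cauchy--Schwarz in $\mathcal{L}_{2}(-x,x)$ restated via Parseval), so no extra kernel regularity or alternative Volterra route is needed to close the estimate up to a harmless redefinition of $\tilde{\varepsilon}_{N}(x)$.
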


The coefficients $g_{n}(x)$ and $s_{n}(x)$ can be calculated
following a simple recurrent integration procedure starting with
\begin{equation}\label{eq:g0s0}
	g_{0}(x)=\phi(0,x)-1, \quad s_{0}(x)=3\left(\frac{S(0,x)}{x}-1\right),
\end{equation}
see Remark \ref{recursive} below.

\begin{rem} (\cite{KNT2017AMC}) \label{recursive}
Let $f$ be a solution of the equation $\displaystyle f''-q(x)f=0$ on the interval $(0,L)$ such that $f(0)=1$ and $f^{\prime}(0)=0$.   
    Consider the functions
        $$\sigma_{2n}(x):=x^{2n}\frac{g_n(x)}{2} \quad \text{and} \quad \sigma_{2n+1}(x):=x^{2n+1}\frac{s_n(x)}{2}, \quad n=0,1,\ldots.$$
The coefficients $s_n(x)$ and $g_n(x)$ are obtained with the aid of the recurrent formulas for the functions $\sigma_n$:
$$\sigma_{-1}(x):=\frac{1}{2x} \quad \sigma_{0}(x):=\frac{f(x)-1}{2},$$
$$\eta_n(x):=\int_0^x (tf'(t)+(n-1)f(t))\sigma_{n-2}(t)dt, \quad \theta_n(x)=\int_0^x\frac{1}{f^2(t)}(\eta_n(t)-tf(t)\sigma_{n-2}(t))dt,$$
$$\sigma_n(x)=\frac{2n+1}{2n-3}\left( x^2 \sigma_{n-2}(x) +c_nf(x)\theta_n(x)\right),$$
for $n=1,2,\ldots,$ where $c_n=1$ if $n=1$ and $c_n=2(2n-1)$ otherwise. We refer to \cite{krBook2020} and references therein. 
\end{rem}

Analogously, the solution $T(\rho,x)$ admits the series representation
\begin{equation}
T(\rho,x) =\frac{\text{sin}(\rho(x-L))}{\rho}+\frac{1}{\rho}\sum_{n=0}^{\infty}t_{n}(x)j_{2n+1}(\rho(L-x)),
\end{equation}
and denote its partial sum by 
\begin{equation}
T_N(\rho,x) =\frac{\text{sin}(\rho(x-L))}{\rho}+\frac{1}{\rho}\sum_{n=0}^{N-1}t_{n}(x)j_{2n+1}(\rho(L-x)).\label{eq:TTrun}
\end{equation}

\begin{rem} (\cite{KNT2017AMC})
The coefficients $s_n$ and $g_n$ satisfy: 
\begin{equation}
\omega(x) = \sum_{n=0}^{\infty} \frac{g_n(x)}{x} = \sum_{n=0}^{\infty} \frac{s_n(x)}{x}, \label{Eqsn}    
\end{equation}
where $\omega(x)=\frac{1}{2}\int_0^x q(t)dt$.
\end{rem}
\begin{rem}\label{RemarkAbelForS}
In the case $\rho=0$, given the solution $\phi(0,x)$, the second linearly independent solution $S(0,x)$ is provided by the Abel formula 
\[
S(0,x)=\phi(0,x)\int_{0}^{x}\frac{1}{\phi^2(0,t)}dt. 
\]
Hence,
the following relation between the coefficients $g_{0}(x)$ and $s_{0}(x)$
holds
\[
\left(\frac{s_{0}(x)}{3}+1 \right)x=\left(g_{0}(x)+1\right)\int_{0}^{x}\frac{1}{\left(g_{0}(t)+1\right)^{2}}dt.
\]
\end{rem}

\section{The spherically symmetric transmission eigenvalue problem} \label{sec_ss_tep}

We consider the interior TEP defined in the unit ball of $\mathbb{R}^3$,  $B:=\{ x\in\mathbb{R}^3: |x|<1 \}$, for a real-valued refractive index $n(|x|):=n(r)$ which is a function depending only on the radial variable. Problem (\ref{trD1})-(\ref{trD4}) is then rewritten as 
\begin{eqnarray}
&\Delta w+k^2n(r)w=0&\ \textrm{in}\ B, \label{trB1}\\
&\Delta v+k^2v=0&\ \textrm{in}\ B, \label{trB2}\\
&w=v&\ \textrm{on}\ \partial B, \label{trB3}\\
&\frac{\partial w}{\partial r}=\frac{\partial v}{\partial r}&\ \textrm{on}\ \partial B. \label{trB4}
\end{eqnarray} 
The spherically symmetric transmission eigenvalues correspond to the complex values of $k$ for which non-trivial solutions exist to the system of equations (\ref{trB1})-(\ref{trB4}).

By introducing the spherical coordinates $(r,\theta,\phi)$ and applying separation of variables, (\ref{trB1})-(\ref{trB4}) simplifies to a  boundary value problem where the spectral parameter $k$ appears in the boundary condition at the right end-point, see e.g., \cite[Section 9.4]{CKbook} and \cite{CM,CP}.
More specifically, if we restrict to the spherically symmetric TEP when the eigenfunctions
are also axially symmetric, we are led to the following eigenvalue
problem
\begin{equation}
y''(r)+k^{2}n(r)y(r)=0,\quad0<r<1,\label{eq:MainSL}
\end{equation}
\begin{equation}
y(0)=0, \label{eq:dirichletCond} 
\end{equation}
with characteristic function
\begin{equation}
D_0(k):=\frac{\sin k}{k}y'(1)-\cos(k)y(1), \label{eq:CharEq}
\end{equation}
together with the normalization condition $y'(0)=1$. The eigenvalues of the problem \eqref{eq:MainSL}-\eqref{eq:CharEq} are called special transmission eigenvalues and are the zeros of $D_0(k)$. We also note that the entire function $D_0(k)$ vanishes at $k=0$, \cite[Theorem 2.4]{AGP}. Furthermore, since $n(r)$ is real-valued, if $k$ is an eigenvalue, then its complex conjugate $k^{*}$ is also an eigenvalue.
The direct problem involves determining $\{k;y\}$ for a given refractive index, while the inverse problem aims to reconstruct the unknown $n(r)$ from the known spectrum.

We assume that $n(r)$ is a function in $C^{1}[0,1]$ with $n''\in\mathcal{L}_{2}(0,1)$. It is common in the literature to suppose that the refractive index is sufficiently smooth in the boundary, i.e., $n(1)=1$ and $n^{\prime}(1)=0$. This is a natural assumption, aligning with the relevant  acoustic scattering problem \cite{CKbook,KBook}. For the needs of our study, this restriction is not necessarily imposed. 

We define the \textquotedblleft less-conventional\textquotedblright\  Liouville transformation, specifically tailored to the framework of the present work
\begin{equation}
\zeta(r):=\int_{r}^{1}\sqrt{n(t)}dt,\label{SLTransform}\end{equation}
\begin{equation}\label{ChangeVariableSolutions}
y(r)=z(\zeta)n^{-1/4}(r),\quad r=r(\zeta)
\end{equation}
and the quantity $\delta:=\zeta(0)$. This parameter is physically understood as the travel time for a wave to propagate from $r=0$ to $r=1$, see \cite{AGP}.

\begin{rem}
The Sturm-Liouville equation \eqref{eq:MainSL} can be transformed into a Schrödinger equation via a Liouville transformation defined as    \begin{equation}
\xi(r):=\int_{0}^{r}\sqrt{n(t)}dt, \label{Ltrans}
\end{equation}
see e.g. \cite{CM, McP}. Although this transformation is commonly used for solving transmission eigenvalue problems, it results convenient for us to consider the alternative transformation \eqref{SLTransform}. This allows us to deal with the solutions $\phi(\rho,x)$ and $S(\rho,x)$ satisfying initial conditions at the origin. 
\end{rem}

Using (\ref{SLTransform})-(\ref{ChangeVariableSolutions}), we can transform \eqref{eq:MainSL}-\eqref{eq:CharEq} into a canonical Sturm-Liouville problem in terms of the function $z(\zeta)$

\begin{equation}
-\ddot{z}(\zeta)+p(\zeta)z(\zeta)=k^{2}z(\zeta),\quad 0<\zeta<\delta,\label{eq:SL}
\end{equation}
\begin{equation}
z(k,\delta)=0,\quad \dot{z}(k,\delta)=-n^{-1/4}(0),\label{eq:conditionsTransformed}
\end{equation}
where $\dot{z}$ denotes the derivative with respect to $\zeta$. The potential $p(\zeta)\in\mathcal{L}_{2}(0,\delta)$ is given
by 
\[
p(\zeta(r)):=\frac{n''(r)}{4(n(r))^{2}}-\frac{5(n'(r))^{2}}{16(n(r))^{3}}.
\]

By substituting in \eqref{eq:CharEq} the expressions for $y(1)$ and $y'(1)$ obtained from \eqref{ChangeVariableSolutions}, $D_0(k)$ can be rewritten as
\begin{equation}
D_0(k)=\left(\frac{\cos k}{n^{1/4}(1)}+\frac{n'(1)\sin k}{4n^{5/4}(1)\,k}\right)z(k,0)+n^{1/4}(1)\frac{\sin k}{k}\dot{z}(k,0).\label{eq:CharEq2}
\end{equation}
We note that (\ref{eq:CharEq2}) is defined at the left endpoint $\zeta=0$. The equivalent characteristic function (for $\xi=\delta$) using the transformation (\ref{Ltrans}) is given in \cite[Eq. (3.10)]{CLM}.

Furthermore, \eqref{eq:CharEq} can be expressed in terms of functions $\phi(k,\delta)$ and $S(k,\delta)$
as follows.
\begin{prop}\label{CharEq}
The characteristic function $D_0(k)$ is equivalent to
 \begin{equation}
D_0(k)=a(k)\phi(k,\delta)+b(k)S(k,\delta),\quad k\in\mathbb{C},\label{eq:NewCharEq}
\end{equation}
where $\phi(k,\zeta)$ and $S(k,\zeta)$ are fundamental solutions of the Sturm-Liouville equation  \eqref{eq:SL} and \begin{equation*}
a(k) :=n^{1/4}(1)\frac{\sin k}{k},\quad b(k) :=-\left(\frac{\cos k}{n^{1/4}(1)}+\frac{n'(1)\sin k}{4n^{5/4}(1)\,k}\right).
\end{equation*}
\end{prop}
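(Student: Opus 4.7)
The plan is to express the function $z(k,\zeta)$, which is characterized by the terminal conditions \eqref{eq:conditionsTransformed} at $\zeta=\delta$, as an explicit combination of the fundamental solutions $\phi(k,\cdot)$ and $S(k,\cdot)$ (whose initial data are at $\zeta=0$), and then substitute the resulting formulas for $z(k,0)$ and $\dot z(k,0)$ back into the already-derived expression \eqref{eq:CharEq2}. All structural ingredients are in place: equation \eqref{eq:SL} has no first-derivative term, so Wronskians of its solutions are constant in $\zeta$, and the identity \eqref{eq:Identity} provides the bridge between an initial-value pair $(\phi,S)$ and a terminal-value solution $T$.

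First, I would observe that $\phi(k,\cdot)$ and $S(k,\cdot)$ form a fundamental system for \eqref{eq:SL}, since the Wronskian $\phi\dot S-\dot\phi S$ evaluated at $\zeta=0$ equals $1$ and is constant on $[0,\delta]$. Applying \eqref{eq:Identity} on the interval $[0,\delta]$ with $L=\delta$, the combination $\phi(k,\delta)S(k,\zeta)-\phi(k,\zeta)S(k,\delta)$ is the unique solution $T(k,\zeta)$ satisfying $T(k,\delta)=0$ and $\dot T(k,\delta)=1$. By uniqueness for the Cauchy problem with terminal data, the solution prescribed by \eqref{eq:conditionsTransformed} is then the scalar multiple
\[
z(k,\zeta)=-n^{-1/4}(0)\,T(k,\zeta)=-n^{-1/4}(0)\bigl[\phi(k,\delta)S(k,\zeta)-\phi(k,\zeta)S(k,\delta)\bigr].
\]
Evaluating at $\zeta=0$ with $\phi(k,0)=1$, $\dot\phi(k,0)=0$, $S(k,0)=0$, $\dot S(k,0)=1$ immediately yields
\[
z(k,0)=n^{-1/4}(0)\,S(k,\delta),\qquad \dot z(k,0)=-n^{-1/4}(0)\,\phi(k,\delta).
\]

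Finally, I would insert these two identities into \eqref{eq:CharEq2} and regroup the terms according to the definitions of $a(k)$ and $b(k)$ given in the statement. The coefficient of $S(k,\delta)$ collapses into $b(k)$ and the coefficient of $\phi(k,\delta)$ into $a(k)$, reproducing the representation \eqref{eq:NewCharEq} as characteristic function (it differs from $D_0(k)$ only by the nonzero multiplicative constant $-n^{-1/4}(0)$, which is immaterial for the transmission eigenvalues since it has no zeros in~$\mathbb{C}$).

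The conceptual core of the argument — rewriting $z$ as a multiple of $T$ via \eqref{eq:Identity} and then reading off $z(k,0)$ and $\dot z(k,0)$ — is routine; the only step requiring care is the sign bookkeeping, because the Liouville variable $\zeta(r)=\int_r^1\sqrt{n(t)}\,dt$ is \emph{decreasing} in $r$, so $\zeta'(r)=-\sqrt{n(r)}$ enters every derivative transformation. This is the point where the minus signs in $a(k)$, $b(k)$ and in the constant $-n^{-1/4}(0)$ originate, and it is the only place where an error could creep in. Once this bookkeeping is done correctly, the identification \eqref{eq:NewCharEq} follows by direct algebraic substitution, with no further analytic input required.
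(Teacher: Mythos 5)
Your proposal is correct and follows essentially the same route as the paper: both express $z(k,\zeta)$ as $-n^{-1/4}(0)\bigl[\phi(k,\delta)S(k,\zeta)-\phi(k,\zeta)S(k,\delta)\bigr]$ via the terminal conditions \eqref{eq:conditionsTransformed}, evaluate at $\zeta=0$ to get $z(k,0)=n^{-1/4}(0)S(k,\delta)$ and $\dot z(k,0)=-n^{-1/4}(0)\phi(k,\delta)$, and substitute into \eqref{eq:CharEq2}. Your explicit remark that the result agrees with $D_0(k)$ only up to the nonzero constant factor $-n^{-1/4}(0)$ (harmless for locating eigenvalues) is a point the paper leaves implicit in the word ``equivalent.''
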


\begin{proof}
Solution $z(k,\zeta)$ satisfying (\ref{eq:conditionsTransformed})
is expressed in terms of the fundamental system of solutions $\left\{ \phi(k,\zeta),S(k,\zeta)\right\} $
as
\[
z(k,\zeta)=-n^{-1/4}(0)\left(S(k,\zeta)\phi(k,\delta)-\phi(k,\zeta)S(k,\delta)\right),\,\,k\in\mathbb{C},
\]
which implies
\begin{equation}
z(k,0)=n^{-1/4}(0)S(k,\delta)\ \ \textrm{and}\ \ z'(k,0)=-n^{-1/4}(0)\phi(k,\delta).\label{eq:ConditionsEquivalence}
\end{equation}
Substitution of (\ref{eq:ConditionsEquivalence}) in (\ref {eq:CharEq2})
gives us (\ref{eq:NewCharEq}).
\end{proof}

 \section{NSBF computational method for the transmission eigenvalue problem} \label{sec_comp_nsbf}

In the present section, we aim to apply the NSBF methodology to address both the direct and inverse spherically symmetric transmission eigenvalue problems. 
 
\subsection{Direct transmission eigenvalue problem} \label{sec_comp_nsbf_dir}

We solve the direct TEP, i.e. compute the real and complex eigenvalues of \eqref{eq:MainSL}-\eqref{eq:CharEq}, for a known refractive index $n(r)$. To do so, we need to find the zeros $k$ of the
expression (\ref{eq:NewCharEq}) for the characteristic function $D_0(k)$. For this, we consider the approximation  $D_{0,N}(k)$ of the characteristic function $D_{0}(k)$ (see Proposition \ref{CharEq}) by the truncated NSBF representations (\ref{eq:STrn}) and (\ref{eq:PhiTrun}), that is,

\begin{equation}
D_{0,N}(k)=a(k)\phi_N(k,\delta)+b(k)S_N(k,\delta),\quad k\in\mathbb{C}.\label{eq:NewCharEqTrunca}
\end{equation}
 The steps for solving the direct problem are presented in Algorithm \ref{algo_dir}.
 
\begin{algorithm}
\caption{The direct transmission eigenvalue problem.}
\label{algo_dir}
\begin{minipage}[b]{1\textwidth}
Assume the refractive index $n(r)$ is given.
\begin{enumerate}[itemsep=0.1ex, topsep=0ex]
\item Apply the Liouville transformation (\ref{SLTransform}) to obtain $p(\zeta(r))$ and $\delta$.

\item Calculate the approximate solutions (\ref{eq:STrn}) and (\ref{eq:PhiTrun}) for the corresponding Sturm-Liouville equation, where an optimal $N$ is obtained using Remark \ref{RemarkEpsilon}. 

\item  Approximate the characteristic function by \eqref{eq:NewCharEqTrunca}, that is
\begin{align}
 &D_{0,N}(k)=a(k)\text{cos}(k\delta)+a(k)\sum_{n=0}^{N-1}g_{n}(\delta)j_{2n}(k\delta)
 +b(k)\frac{\text{sin}(k\delta)}{k}+\frac{b(k)}{k}\sum_{n=0}^{N-1}s_{n}(\delta)j_{2n+1}(k\delta). \label{eq:NSBFDirectEqs-1}
\end{align}
\item  Compute the transmission eigenvalues by locating the zeros of $D_{0,N}(k)$, using Remarks \ref{RemarkPrincipleAlg} and \ref{RemarkSplines}. 
\end{enumerate}
\end{minipage}
\end{algorithm}

\begin{rem} \label{RemarkEpsilon}
The choice of an appropriate number $N$ of the coefficients to be computed is performed using \eqref{Eqsn}. Indeed, the sufficient smallness of the expressions
    \begin{align}\label{IndDirect}
                    \varepsilon_{1,N}&=\max_{\zeta} \left| \sum_{n=0}^{N-1} g_n(\zeta) - \sum_{n=0}^{N-1} s_n(\zeta) \right|, 
\end{align}
\begin{equation*} \varepsilon_{2,N}=\max_{\zeta} \left| \sum_{n=0}^{N-1} \frac{g_n(\zeta)}{\zeta}- \omega(\zeta) \right|\quad \textrm{or}\quad
        \varepsilon_{3,N}=\max_{\zeta} \left| \sum_{n=0}^{N-1} \frac{s_n(\zeta)}{\zeta}- \omega(\zeta)\right|, 
\end{equation*}
when $\zeta=\delta$, indicates a sufficiently good approximation of the characteristic function. 
\end{rem}

\begin{rem} \label{RemarkPrincipleAlg}
   We locate the zeros of the approximate characteristic function \eqref{eq:NSBFDirectEqs-1} with the aid of the argument principle theorem \cite{conway1978}. In particular, we compute the change of the argument along rectangular contours $\gamma$. If the change of the argument along $\gamma$ is zero, then consider another contour. Otherwise, subdivide the region within the contour until getting the desired accuracy.  See \cite{VKSTUV} for a detailed application of this methodology.
\end{rem}

The accurate enough computation of the zeros of $D_{0}(k)$ in
some strip $\left\vert\operatorname{Im} k\right\vert <C$ with the aid of the
argument principle (Remark \ref{RemarkPrincipleAlg}) is guaranteed by the accurate enough uniform
approximation of $D_{0}(k)$ by $D_{0,N}(k)$ in the same strip (see Theorem \ref{Th NSBF old})
and by the following version of Rouch\'{e}'s theorem.

\begin{thm}(\cite[p. 213]{Dettman})
   \label{Rouche} 
   Let $f(k)$ and
$g(k)$ be analytic functions within and on a simple closed contour $\gamma$
which satisfy the inequality $\left\vert g(k)\right\vert <\left\vert
f(k)\right\vert $ on $\gamma$, where $f(k)$ does not vanish. Then $f(k)$ and
$f(k)+g(k)$ have the same number of zeros inside $\gamma$. 
\end{thm}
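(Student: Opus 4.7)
The plan is to derive Rouché's theorem from the argument principle applied to both $f$ and $f+g$, combined with the observation that the hypothesis $|g(k)|<|f(k)|$ on $\gamma$ keeps the ratio $(f+g)/f$ confined to a region avoiding the origin.

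First, I would verify that $f+g$ has no zeros on $\gamma$. Indeed, if $f(k_0)+g(k_0)=0$ at some $k_0\in\gamma$, then $|g(k_0)|=|f(k_0)|$, contradicting the strict inequality $|g(k)|<|f(k)|$ on $\gamma$. Together with the assumption that $f$ itself does not vanish on $\gamma$, this means the argument principle is applicable to each of $f$ and $f+g$. Letting $N_f$ and $N_{f+g}$ denote the respective numbers of zeros inside $\gamma$ counted with multiplicity, I would then write
\[
N_{f+g}-N_f \;=\; \frac{1}{2\pi i}\oint_\gamma \frac{(f+g)'(k)}{(f+g)(k)}\,dk \;-\; \frac{1}{2\pi i}\oint_\gamma \frac{f'(k)}{f(k)}\,dk \;=\; \frac{1}{2\pi i}\oint_\gamma \frac{h'(k)}{h(k)}\,dk,
\]
where $h(k):=1+g(k)/f(k)$ is analytic and nonvanishing on a neighborhood of $\gamma$. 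By the argument principle, the last integral equals the winding number of the image curve $h\circ\gamma$ about the origin.

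The key observation closing the proof is that the bound $|g/f|<1$ on $\gamma$ forces $h(\gamma)$ to lie in the open disk $\{w\in\mathbb{C}:|w-1|<1\}$. This disk is simply connected and does not contain the origin, so a continuous branch of $\log h$ exists along $\gamma$; consequently $h(\gamma)$ has winding number zero about $0$, and therefore $N_{f+g}=N_f$. The argument is essentially straightforward once the argument principle and the strict inequality are combined; the only place requiring care is the preliminary verification that both $f$ and $f+g$ are nonvanishing on $\gamma$, so that the logarithmic derivative integrals are well defined, and this is precisely what the hypothesis supplies.
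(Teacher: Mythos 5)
Your proof is correct: the preliminary check that $f+g$ is nonvanishing on $\gamma$, the reduction of $N_{f+g}-N_f$ to the winding number of $h=1+g/f$ about the origin, and the observation that $h(\gamma)$ lies in the disk $\{\,|w-1|<1\,\}$ together constitute the standard argument-principle proof of Rouch\'e's theorem. Note, however, that the paper does not prove this statement at all --- it is quoted verbatim as a classical result with a citation to Dettman's textbook --- so there is no in-paper argument to compare against; your write-up simply supplies the textbook proof that the authors chose to omit.
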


Hence, we obtain:

\begin{prop}\label{approxProp}
   Assume that the truncation parameter $N$ is chosen so that
   \[
      \lvert D_{0}(k) - D_{0,N}(k)\rvert < \varepsilon
      \quad \text{whenever } \left\vert\operatorname{Im} k\right\vert <C.
   \]
   Let $\gamma$ be any simple closed contour belonging to the strip $\left\vert\operatorname{Im} k\right\vert <C$, and suppose
   \[
      \varepsilon < \min_{\,k\in\gamma} \lvert D_{0,N}(k)\rvert.
   \]
   Then $D_{0}(k)$ and $D_{0,N}(k)$ have the same number of zeros inside $\gamma$.
\end{prop}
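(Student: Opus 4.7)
The plan is to apply Theorem \ref{Rouche} (Rouché's theorem) directly, with the choice $f(k):=D_{0,N}(k)$ and $g(k):=D_{0}(k)-D_{0,N}(k)$, so that $f(k)+g(k)=D_{0}(k)$. The assertion of the proposition then becomes exactly the conclusion of Rouché's theorem, provided its hypotheses are verified on the contour $\gamma$.

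First I would record that $D_{0}(k)$ and $D_{0,N}(k)$ are both entire functions of $k$, and therefore analytic on and inside any simple closed contour $\gamma$. For $D_{0}(k)$, this is immediate from the representation (\ref{eq:NewCharEq}) in Proposition \ref{CharEq}: the prefactors $a(k)$ and $b(k)$ are entire, and the Sturm--Liouville solutions $\phi(k,\delta)$ and $S(k,\delta)$ depend entirely on the spectral parameter by standard theory. For $D_{0,N}(k)$ the same conclusion follows at once from (\ref{eq:NewCharEqTrunca}), because the spherical Bessel functions $j_{n}(k\delta)$ are entire in $k$ and only finitely many of them appear in the partial sums $S_{N}$ and $\phi_{N}$.

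Next I would verify the strict inequality required by Theorem \ref{Rouche} pointwise on $\gamma$. Since $\gamma\subset\{|\operatorname{Im} k|<C\}$, the standing hypothesis gives $|D_{0}(k)-D_{0,N}(k)|<\varepsilon$ at every point of $\gamma$. Combining this with the second hypothesis $\varepsilon<\min_{k\in\gamma}|D_{0,N}(k)|$ yields, for every $k\in\gamma$,
\[
|g(k)|=|D_{0}(k)-D_{0,N}(k)|<\varepsilon\leq \min_{k\in\gamma}|D_{0,N}(k)|\leq |D_{0,N}(k)|=|f(k)|.
\]
In particular, $f(k)=D_{0,N}(k)$ cannot vanish on $\gamma$, because its modulus is bounded below by the strictly positive number $\min_{k\in\gamma}|D_{0,N}(k)|>\varepsilon\geq 0$. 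Thus both non-vanishing of $f$ and the strict bound $|g|<|f|$ hold on $\gamma$, and Theorem \ref{Rouche} applies.

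Finally, the theorem yields that $f(k)$ and $f(k)+g(k)=D_{0}(k)$ have the same number of zeros inside $\gamma$, which is the claim. There is no real obstacle here: the proposition is essentially a formal packaging of Rouché's theorem adapted to the characteristic function and its NSBF truncation. The only point worth flagging is that the strict inequality $\varepsilon<\min_{k\in\gamma}|D_{0,N}(k)|$ plays a double role, simultaneously guaranteeing the dominance condition and the non-vanishing of $f$ on $\gamma$.
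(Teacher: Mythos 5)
Your proposal is correct and follows essentially the same route as the paper's own proof: the identical decomposition $f(k)=D_{0,N}(k)$, $g(k)=D_{0}(k)-D_{0,N}(k)$, the same chain of inequalities on $\gamma$, and the same appeal to Theorem \ref{Rouche}. Your added remarks on the entirety of $D_{0}$ and $D_{0,N}$ and on the double role of the strict inequality are correct elaborations of points the paper leaves implicit.
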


\begin{proof}
   For $k$ in the strip $\left\vert\operatorname{Im} k\right\vert <C$, we have
   \[
      \lvert D_{0}(k) - D_{0,N}(k)\rvert < \varepsilon.
   \]
   In particular, on any simple closed contour $\gamma$ in that strip, if 
   \[
      \varepsilon < \min_{\,k\in\gamma} \lvert D_{0,N}(k)\rvert,
   \]
   then
   \[
      \lvert D_{0}(k) - D_{0,N}(k)\rvert 
      < \min_{\,k\in\gamma} \lvert D_{0,N}(k)\rvert 
      \quad \text{for all } k \in \gamma.
   \]
   Denote
   \[
      f(k) := D_{0,N}(k),
      \qquad
      g(k) := D_{0}(k) - D_{0,N}(k).
   \]
   By construction, both $f(k)$ and $g(k)$ are analytic on and inside $\gamma$.  Moreover, on $\gamma$,
   \[
      \lvert g(k)\rvert = \lvert D_{0}(k) - D_{0,N}(k)\rvert 
      < \min_{\,k\in\gamma} \lvert D_{0,N}(k)\rvert 
      = \min_{\,k\in\gamma} \lvert f(k)\rvert,
   \]
   so that
   \[
      \lvert g(k)\rvert < \lvert f(k)\rvert 
      \quad \text{for all } k \in \gamma,
   \]
   and $f(k)$ does not vanish on $\gamma$.  By Theorem \ref{Rouche}, $f(k)$ and $f(k)+g(k) = D_{0}(k)$ have the same number of zeros inside $\gamma$, and this completes the proof.
\end{proof}

\begin{rem} \label{RemarkSplines}
Locating only the real zeros of the approximate characteristic function \eqref{eq:NSBFDirectEqs-1} can be performed by interpolating this function in the real axis with a spline, and finding its roots. 
\end{rem}

\subsection{Inverse transmission eigenvalue problem} \label{sec_comp_nsbf_inv}

Next, we consider the application of the NSBF representations to solve the inverse problem.  This discussion is divided into three parts: the recovery of $\delta$ from transmission eigenvalues, the recovery of $n(r)$ from transmission eigenvalues, and the application of spectrum completion in addressing the inverse problem.

\subsubsection{Recovery of $\delta$ from transmission eigenvalues} \label{sec_comp_nsbf_inv_delta}

In inverse Sturm-Liouville eigenvalue problems for (\ref{eq:SL}), $\delta$ is usually known, as it corresponds to the right endpoint of the domain of definition. Nevertheless, in the more general category of coefficient inverse problems involving string-type Sturm-Liouville equations, the application of the Liouville transformation introduces an unknown transformed parameter. Developing accurate computational methods to estimate this parameter is therefore of significant interest and has broad applications in various inverse problems. We refer, e.g., to \cite{Mirzaei}, where this problem is discussed in application to the inverse two-spectrum problem for the string equation. Here, we treat the recovery of $\delta$ as a necessary first step of our approach to the reconstruction of the refractive index from transmission eigenvalues. 

Let us discuss different methodologies for recovering $\delta$ from transmission eigenvalues. For the inverse TEP (\ref{eq:MainSL})-(\ref{eq:CharEq}), $\delta$ can be estimated from the real sub-spectrum using the asymptotic formula \cite[Lemma 2]{McP}:
\begin{equation}
    k_j^2=\frac{j^2\pi ^2}{(\delta -1)^2}+O(1),\quad j\rightarrow +\infty, \label{aformula}
\end{equation}
provided that $\delta\neq 1$ and the refractive index satisfies $n\in C^1(\mathbb{R}),\ n^{\prime\prime}\in \mathcal{L}^2[0,1]$ and $n(1)=1,\ n^{\prime}(1)=0$.
We refer to \cite{MPS} for an example of using the above formula to recover $\delta$ from the knowledge of the lowest real eigenvalues.

Furthermore, if both $n(1)\neq 1$ and $\delta \neq 1$, it is shown in \cite{CLM} that the density of all (real and complex) zeros of $D_0$ in the right half-plane is equal to $(\delta +1)/\pi$. Consequently, $\delta$ can be uniquely determined by the knowledge of all transmission eigenvalues. This result is further explored in \cite[Section 7.6.3]{KBook}. We can use this density argument, to approximate $\delta\neq 1$ when $n(1)\neq 1$. Specifically, given the number of eigenvalues $\mathcal{N}$ within the strip $0<\operatorname{Re} k<\mathcal{R}$, for $\mathcal{R}$ large enough, the average density of the roots is equal to $\mathcal{N}/\mathcal{R}$. Therefore, $\delta$ can be approximated by 
\begin{equation}
      \tilde{\delta}=\frac{\mathcal{N}\pi}{\mathcal{R}}-1,\quad \mathcal{R}>>0. \label{aformula2}  
\end{equation}

The above approaches have some drawbacks due to the specific restrictions on the values of $\delta, \ n(1)$ and $n^\prime(1)$. To overcome these limitations, we develop a novel method based on the NSBF formulas. This is achieved by combining the expressions for the coefficients $s_n(\delta)$ and $g_n(\delta)$ with the indicator function in \eqref{IndDirect}, and is presented in Algorithm \ref{algo_delta} that follows. More specifically, we assume knowledge of a set of transmission eigenvalues $\{ k_{j} \}_{j=1}^{J}$, as well as the values of $n(1)$ and $n'(1)$. Then, $\delta$ and $N$ are estimated by solving a system of algebraic equations for the coefficients $s_n(\delta)$ and $g_n(\delta)$. The corresponding  \textquotedblleft optimal\textquotedblright\ values are determined by $\underset{\delta, N}{\textrm{argmin}}\ \varepsilon$, see below Remark~\ref{RemarkEpsilonAtDelta}. 

\begin{rem} \label{RemarkEpsilonAtDelta}
Based on \eqref{IndDirect}, the choice of an appropriate number of the coefficients to be computed and approximate $\delta^* \approx \delta$ is performed by choosing the values minimizing 
\begin{align*}
                     \varepsilon_{1,N}(\delta)&= \left| \sum_{n=0}^{N-1} g_n(\delta) - \sum_{n=0}^{N-1} s_n(\delta) \right|,
\end{align*}
thus   \begin{align*}
                    \delta^* = \underset{\delta, N}{\text{argmin}}\ \varepsilon_{1,N}(\delta),
\end{align*}
see Algorithm \ref{algo_delta} for more details.
\end{rem}

\begin{algorithm}
\caption{Computation of $\delta$.}
\label{algo_delta}
\begin{minipage}[b]{1\textwidth}
Assume that the set of transmission eigenvalues $\left\{ k_{j}\right\} _{j=1}^{J},$ $n(1)$ and $n'(1)$ are given. 

\begin{enumerate}[itemsep=0.1ex, topsep=0ex]
\item For a grid of points  $\{\delta^m\}_{m=1}^M$ and an array of values of $N$, consider the equations $D_{0,N}(k_j)=0$ (see \eqref{eq:NewCharEqTrunca})
\begin{align}
 & a(k_j)\sum_{n=0}^{N-1}g_{n}(\delta^m)j_{2n}(k_{j}\delta^m)+
\frac{b(k_j)}{k_j} \sum_{n=0}^{N-1}s_{n}(\delta^m)j_{2n+1}(k_{j}\delta^m)\nonumber \\
& =-a(k_j)\text{cos}(k_{j}\delta^m)-b(k_j)\frac{\text{sin}(k_{j}\delta^m)}{k_{j}}.\label{eq:MainApprox3}
\end{align}
\item For each $\delta^m$, solve the finite system (\ref{eq:MainApprox3}) of $J$ linear
algebraic equations for the coefficients $\left\{ g_{n}(\delta^m)\right\} _{n=0}^{N-1}$
and $\left\{ s_{n}(\delta^m)\right\} _{n=0}^{N-1}$ , where $2N\leq J$ (so we have square or overdetermined systems).

\item Choose the number of the NSBF coefficients $N^*$ which delivers  $\underset{\delta^m, N}{\text{min }} \varepsilon_{1,N}(\delta^m)$, see Remark~\ref{RemarkEpsilonAtDelta}.

\item Using $N^*$ find $\delta^*=\underset{\delta, N^*}{argmin} \  \varepsilon_{1,N^*}(\delta)$, by refining the initial mesh of points $\delta$. 

\end{enumerate}
\end{minipage}
\end{algorithm}

\subsubsection{Recovery of $n(r)$ from transmission eigenvalues} \label{sec_comp_nsbf_inv_index}

We now turn our attention to the inverse problem, that is recovering the refractive index from the knowledge of transmission eigenvalues. The question of the uniqueness of this inverse eigenvalue problem has been the subject of investigation for many years up to the present. For further details, see Chapter 6 of \cite{CCH}, Chapter 10.4 of \cite{CKbook}, Section 4 of \cite{Pallreview}, and the associated references.

It can be easily verified that the refractive index and $r(\zeta)$ satisfy the following initial value problems
\begin{align}
-\ddot{(n^{1/4})}+p(\zeta)n^{1/4} & =0,\,0<\zeta<\delta, \label{ivp1}
\end{align}
\begin{align}
n^{1/4}(r(0))=n^{1/4}(1):=n_{0},\,\left.\frac{d}{d\zeta}n^{1/4}(r(\zeta))\right|_{\zeta=0}:=n_{1}, \label{ivp2}
\end{align}
and \begin{equation}\frac{dr}{d\zeta}=-\frac{1}{\sqrt{n(r(\zeta))}},\quad r(0)=1.\label{inverseR}\end{equation} 

When using the transformation (\ref{Ltrans}), analogous initial value problems are defined \cite{CCqual, KBook}. Given the potential $p(\zeta)$, the values $n(1),\ n'(1)$ and $\delta$, the refractive index $n(r)$ can be  uniquely determined. For further details on the unique determination of $n(r)$ from an appropriate definition of a Goursat problem for $p(\zeta)$, see, for example, \cite{CL}.

Now, from (\ref{ivp1})-(\ref{ivp2}), we obtain that 
\begin{align}\label{MainEq_n}
n^{1/4}(r(\zeta))=n_{0}\phi(0,\zeta)+n_{1}S(0,\zeta). 
\end{align}
Therefore, using the relations in \eqref{eq:g0s0}, the function $n(r)$ can be
written in terms of the first NSBF coefficients $g_{0}(\zeta)$ and $s_{0}(\zeta)$, 
\begin{equation}
n^{1/4}(r(\zeta))=n_{0}(g_{0}(\zeta)+1)+n_{1}\left(\frac{s_{0}(\zeta)}{3}+1\right)\zeta.\label{eq:RecoverR}
\end{equation}
Moreover, to solve the Cauchy problem \eqref{inverseR} that allows to recover the refractive index in the original interval, equation \eqref{eq:RecoverR} is used to express $r(\zeta)$ in terms of the first coefficients $g_{0}(\zeta)$ and $s_{0}(\zeta)$ as follows
\begin{equation}
r(\zeta)=1-\int_{0}^{\zeta}\frac{1}{\sqrt{n(r(t))}}dt=1-\int_{0}^{\zeta}\frac{1}{\left(n_{0}(g_{0}(t)+1)+n_{1}\left(\frac{s_{0}(t)}{3}+1\right)t\right)^{2}}dt,\label{eq:ChangeVariable}
\end{equation}
 which can be simplified by using the following observations.
 
\begin{rem}
Note that the solution $S(0,\zeta)$ of \eqref{ivp1} can be obtained by applying the Abel formula to the solution $n^{1/4}(r(\zeta))$:
\begin{align*}  
S(0,\zeta)=n_0n^{1/4}(r(\zeta))\int_{0}^{\zeta}\frac{1}{\sqrt{n(r(t))}}dt,
\end{align*}
which can be rewritten as
\begin{align}\label{S2}  
S(0,\zeta)=n_0n^{1/4}(r(\zeta))(1-r(\zeta))),
\end{align}
by using the first equality in \eqref{eq:ChangeVariable}. 
\end{rem}
Substitution of formula \eqref{eq:RecoverR} into \eqref{S2} gives another expression for $r(\zeta)$ in terms of the coefficients $g_0(\zeta)$ and $s_0(\zeta)$,
\begin{equation}\label{NewChangeVariable}
    r(\zeta)=1-\frac{\left(s_0(\zeta)+3\right)\zeta}{3n_0^2(g_0(\zeta)+1)+n_0n_1(s_0(\zeta)+3)\zeta}.
\end{equation}
In the special case $n_{1}=0$ (i.e. $n^\prime(1)=0$), the above expression for $r(\zeta)$ simplifies to
\begin{equation}
   r(\zeta)=1-\frac{\left(s_0(\zeta)+3\right)\zeta}{3n_0^2(g_0(\zeta)+1)}.\label{eq:-2}
\end{equation}
Indeed, \eqref{eq:-2} can also be obtained by applying the result from Remark \ref{RemarkAbelForS} to equation \eqref{eq:ChangeVariable}.

 \begin{prop}
 The solutions $S(0,x)$ and $\phi(0,x)$ satisfy the relations

\begin{align}\label{S3}
S(0,\delta)=n_0n(0)^{1/4},
\end{align} 
\begin{align}\label{AuxPhi}
\phi(0,\delta)=\frac{n^{1/4}(0)(1-n_0n_1)}{n_0}
\end{align}
and
\begin{align}\label{AuxS_Phi}
S(0,\delta)(1-n_0n_1)=n^2_{0}\phi(0,\delta). 
\end{align}
\end{prop}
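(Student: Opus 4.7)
My plan is to derive all three identities by evaluating the two key relations already established—namely formula \eqref{MainEq_n} for $n^{1/4}(r(\zeta))$ in terms of $\phi(0,\zeta)$ and $S(0,\zeta)$, and formula \eqref{S2} expressing $S(0,\zeta)$ via $n^{1/4}(r(\zeta))$ and $1-r(\zeta)$—at the specific endpoint $\zeta=\delta$, and then combining the resulting equations. The pivotal preliminary observation is that $r(\delta)=0$: since $\zeta(r)=\int_r^1 \sqrt{n(t)}\,dt$ so that $\zeta(0)=\delta$, and $r(\zeta)$ is defined as the inverse of $\zeta(r)$, we have $r(\delta)=0$, and hence $n^{1/4}(r(\delta))=n^{1/4}(0)$.

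First I would prove \eqref{S3}. Specializing \eqref{S2} to $\zeta=\delta$ and using $r(\delta)=0$ gives
\[
S(0,\delta)=n_0\, n^{1/4}(0)\,(1-0)=n_0\, n^{1/4}(0),
\]
which is exactly \eqref{S3}. Next, to obtain \eqref{AuxPhi}, I would evaluate \eqref{MainEq_n} at $\zeta=\delta$, yielding
\[
n^{1/4}(0)=n_0\,\phi(0,\delta)+n_1\,S(0,\delta),
\]
and then substitute the value of $S(0,\delta)$ just obtained from \eqref{S3}. Solving the resulting linear equation for $\phi(0,\delta)$ produces
\[
\phi(0,\delta)=\frac{n^{1/4}(0)\,(1-n_0 n_1)}{n_0},
\]
which is \eqref{AuxPhi}.

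Finally, identity \eqref{AuxS_Phi} is an immediate algebraic consequence: multiplying \eqref{S3} by $(1-n_0 n_1)$ and comparing with $n_0$ times \eqref{AuxPhi} gives
\[
S(0,\delta)(1-n_0 n_1)=n_0\, n^{1/4}(0)(1-n_0 n_1)=n_0\cdot n_0\,\phi(0,\delta)=n_0^{2}\,\phi(0,\delta),
\]
as required. There is no real obstacle here: once the identification $r(\delta)=0$ is made explicit, the three relations follow by direct substitution and elementary manipulation from the two previously established formulas \eqref{MainEq_n} and \eqref{S2}. The only point that needs a line of justification is the inversion of the Liouville change of variable at the endpoint, and the consistency of the chosen initial conditions $n_0=n^{1/4}(1)$, $n_1=\frac{d}{d\zeta}n^{1/4}(r(\zeta))\big|_{\zeta=0}$ with the initial data $\phi(0,0)=1$, $\phi'(0,0)=0$ and $S(0,0)=0$, $S'(0,0)=1$ used in \eqref{MainEq_n}.
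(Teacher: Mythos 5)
Your proof is correct and follows essentially the same route as the paper: evaluate \eqref{S2} at $\zeta=\delta$ (using $r(\delta)=0$) to get \eqref{S3}, substitute into \eqref{MainEq_n} evaluated at $\zeta=\delta$ to get \eqref{AuxPhi}, and combine the two to obtain \eqref{AuxS_Phi}. Your explicit justification of $r(\delta)=0$ is a welcome addition that the paper leaves implicit.
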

\begin{proof}
Considering $\zeta=\delta$ in the equation \eqref{S2} leads to \eqref{S3}.
Additionally, from \eqref{MainEq_n} we have
\begin{align}\label{MainEq_n2}
n^{1/4}(0)=n_{0}\phi(0,\delta)+n_{1}S(0,\delta). 
\end{align}
Substitution of \eqref{S3} into \eqref{MainEq_n2} leads to \eqref{AuxPhi} and \eqref{AuxS_Phi}.
\end{proof}
The relation \eqref{AuxS_Phi} yields to a useful formula between the first coefficients $g_0(\delta)$ and $s_0(\delta)$
\begin{equation}\label{relCoefs}
    g_0(\delta)=s_0(\delta) \frac{\delta(1-n_0n_1)}{3n_0^2}+\frac{\delta(1-n_0n_1)}{n_0^2}-1.
\end{equation}
With the aid of \eqref{relCoefs} we rewrite system \eqref{eq:MainApprox3} in order to eliminate $g_0(\delta)$ and reduce the number of unknowns

\begin{align}
&s_{0}(\delta)\left( \frac{b(k_j)}{k_j}j_{1}(k_{j}\delta)+a(k_j)\frac{\delta (1-n_0n_1)}{3n_0^2}j_{0}(k_{j}\delta)\right)+a(k_j)\sum_{n=1}^{N-1}g_{n}(\delta)j_{2n}(k_{j}\delta) \nonumber\\
&+\frac{b(k_j)}{k_j}\sum_{n=1}^{N-1}s_{n}(\delta)j_{2n+1}(k_{j}\delta) =-a(k_j)\left(\text{cos}(k_{j}\delta)+\left(\frac{\delta(1-n_0n_1)}{n_0^2}-1\right)j_0(k_j\delta)\right) \nonumber\\
&-b(k_j)\frac{\text{sin}(k_j\delta)}{k_j}. 
\label{NewSystemReduced}
\end{align}

From the discussion above, we can devise an algorithm to solve the inverse problem. The detailed steps are presented in Algorithm \ref{algo_inv}. Note that the fact that zero is a transmission eigenvalue is equivalent to \eqref{AuxS_Phi}, which is considered when solving the inverse problem by using system \eqref{NewSystemReduced}. 

\begin{algorithm}
\caption{The inverse transmission eigenvalue problem.}

\label{algo_inv}
\begin{minipage}[b]{1\textwidth}
Assume that the set of nonzero eigenvalues  $\left\{ k_{j}\right\} _{j=1}^{J},$ $n(1)$ and $n'(1)$ are given. 

\begin{enumerate}[itemsep=0.1ex, topsep=0ex]
 \item If $\delta$ is known, use Remark \ref{RemarkEpsilon} to find an optimal $N$.
 \item If $\delta$ is unknown, use Algorithm \ref{algo_delta}.
\item  Consider the equations $D_{0,N}(k_j)=0$ 
 by using  (\ref{NewSystemReduced}).

\item Solve the finite system (\ref{NewSystemReduced}) of $J$ linear
algebraic equations for the coefficients $\left\{ g_{n}(\delta)\right\} _{n=1}^{N-1}$
and $\left\{ s_{n}(\delta)\right\} _{n=0}^{N-1}$ , where $2N\leq J+1$. Find $g_0(\delta)$ from equation \eqref{relCoefs}, and $n(0)$ from \eqref{S3}. 

\item Construct the approximate solutions $\phi_{N}(k,\delta)$ and $S_{N}(k,\delta)$,
for $k$ in a strip of the complex plane, from the sets of coefficients found in the previous
step and (\ref{eq:STrn})-(\ref{eq:PhiTrun}). 

\item Approximate the identity (\ref{eq:Identity}) by using (\ref{eq:STrn}), (\ref{eq:PhiTrun}) and (\ref{eq:TTrun}) and
the expressions for $\phi_{N}(k,\delta)$ and $S_{N}(k,\delta)$ of step 5, 
i.e.,
\[
T_{N}(k,\zeta)=\phi_{N}(k,\delta)S_{N}(k,\zeta)-\phi_{N}(k,\zeta)S_{N}(k,\delta),\quad 0<\zeta<\delta
\]
which is equal to 
\begin{align}
 & \frac{\text{sin}(k(\zeta-\delta))}{k}-\phi_{N}(k,\delta)\frac{\text{sin}(k\zeta)}{k}+S_{N}(k,\delta)\cos(k\zeta)=-\frac{1}{k}\sum_{n=0}^{N-1}t_{n}(\zeta)j_{2n+1}(k(\delta-\zeta))\nonumber \\
 & -S_{N}(k,\delta)\sum_{n=0}^{N-1}g_{n}(\zeta)j_{2n}(k\zeta)+\frac{\phi_{N}(k,\delta)}{k}\sum_{n=0}^{N-1}s_{n}(\zeta)j_{2n+1}(k\zeta).\label{eq:LAstSystem}
\end{align}

\item Solve the finite system of $M$ linear algebraic equations for
the coefficients $\left\{ t_{n}(\zeta)\right\} _{n=0}^{N-1}$, $\left\{ g_{n}(\zeta)\right\} _{n=0}^{N-1}$
and $\left\{ s_{n}(\zeta)\right\} _{n=0}^{N-1}$ constructed from equation
(\ref{eq:LAstSystem}) evaluated at a set of distinct points $k=\left\{ k_{n}\right\} _{n=1}^{M}$. 

\item Recover $n(r)$ for $r\in(0,1)$ by substituting the coefficients $g_0(\zeta)$ and $s_0(\zeta)$ found in the previous step in equations (\ref{eq:RecoverR}) and (\ref{NewChangeVariable}), or (\ref{eq:-2}) if $n'(1)=0$.

\end{enumerate}
\end{minipage}
\end{algorithm}

\subsubsection{Spectrum completion and the inverse problem} \label{sec_comp_nsbf_inv_scompl}

In this section, we study the spectrum completion for the transmission eigenvalue problem. That is, given a small subset of real and/or complex eigenvalues, we aim to compute subsequent eigenvalues with high accuracy. Using this method, we proceed with solving the inverse problem, having only a few eigenvalues as input for the algorithm. This idea was originally introduced in \cite{KravSC} for the direct and inverse Sturm-Liouville eigenvalue problem, to which we refer for further details; see also \cite{kravchenko2025sinc}. 

The possibility of the spectrum completion is based on Proposition \ref{approxProp}, which
essentially establishes that if a sufficient number of the NSBF coefficients
is recovered accurately enough, then zeros of the approximate characteristic
function are close to those of the exact one. Moreover, as we show in
Subsection \ref{sec_exampl_inv_scompl}, application of this technique previous to solving the
inverse problem may contribute in stabilizing the result of the
reconstruction, especially when the number of the originally given eigenvalues
was small. This idea of using the \textquotedblleft
completed\textquotedblright \ eigenvalues is akin to completing the set of
given eigenvalues by the asymptotic ones, which is frequently used in inverse
spectral problems, see, e.g., the discussion in \cite[Section 13.3]{krBook2020}.
However, usually the use of the asymptotic eigenvalues  requires some
additional information on the unknown coefficient of the equation. Spectrum
completion is free of this drawback.

Thus, the solution of the inverse problem combined with the spectrum completion involves three main steps: first, completing the spectra; second, reducing the problem to a system of linear algebraic equations; and finally, reconstructing the refractive index and solving the inverse problem. To complete the spectrum, we follow the steps summarized in Algorithm \ref{algo_sc} and then solve the inverse problem using Algorithm \ref{algo_inv}.

\begin{algorithm}
\caption{Spectrum completion.}
\label{algo_sc}
\begin{minipage}[b]{1\textwidth}
Assume that a set of eigenvalues $\left\{ k_{j}\right\} _{j=1}^{J}$
, $n(1),$ $n'(1)$ and possibly $\delta$ are given.

\begin{enumerate}[itemsep=0.1ex, topsep=0ex]
\item Perform steps  1-5 from Algorithm \ref{algo_inv}.
\item Construct $D_{0,N}(k)$ for $k$ in a strip of the complex plane.
\item Locate more zeros of $D_{0,N}(k)$ by using Remarks \ref{RemarkPrincipleAlg} and \ref{RemarkSplines}. 

\end{enumerate}
\end{minipage}
\end{algorithm}

 \section{Numerical Examples} \label{sec_exampl}

In the following, we present various examples to verify the validity of our NSBF approximation methodology, for solving both the direct and the inverse problems. 

The computations were performed on a standard desktop computer with Intel Core i3 (2.00GHz) computer, with 12GB RAM, using MATLAB version 2024b. The average computational time was a few minutes for solving the direct problems, a few seconds for computing  $\delta$, and approximately a couple of seconds for the inverse problems. These times varied depending on the complexity of each problem considered.

 \subsection{Direct transmission eigenvalue problem} \label{sec_exampl_dir}

We begin by presenting numerical examples for the direct problem, aimed at validating the NSBF approximation methodology. The Algorithm \ref{algo_dir} described in Section \ref{sec_comp_nsbf_dir} is applied in the examples that follow. 

\begin{example} \label{exam1}
Let $n(r)=16/\left((r+1)(3-r)\right)^{2}$. The corresponding potential under the Liouville transformation is
$\ p(\zeta(r))=1/4$, while the new variable $\zeta$ lies in the interval $[0,\log(3)]$. By minimizing the indicator $\varepsilon_{1,N}$ in \eqref{IndDirect}, we obtain $N=6$, corresponding to $\displaystyle \min_{N\in[1,50]} \varepsilon_{1,N}=4.22\times 10^{-15}$, as presented in Figure \ref{ex1} (right). 
The  respective calculated eigenvalues are shown in Figure
\ref{ex1} (left). Our results are in agreement with \cite[Example 2]{CLM}. 

\begin{figure}[H]
\begin{centering}
\begin{minipage}[c][1\totalheight][t]{0.48\textwidth}%
\centering
\includegraphics[scale=0.58]{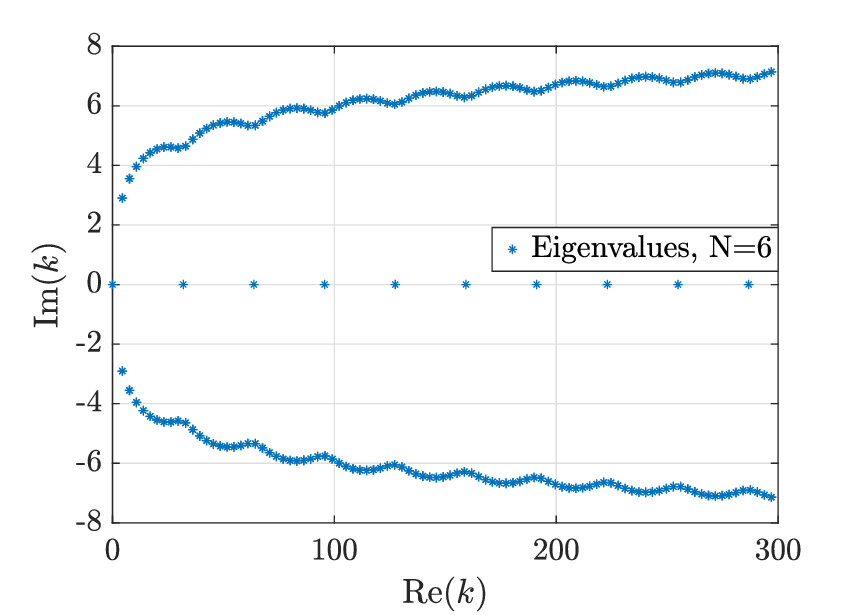}
\par
\end{minipage}
\begin{minipage}[c][1\totalheight][t]{0.48\textwidth}%
\centering
\includegraphics[scale=0.58]{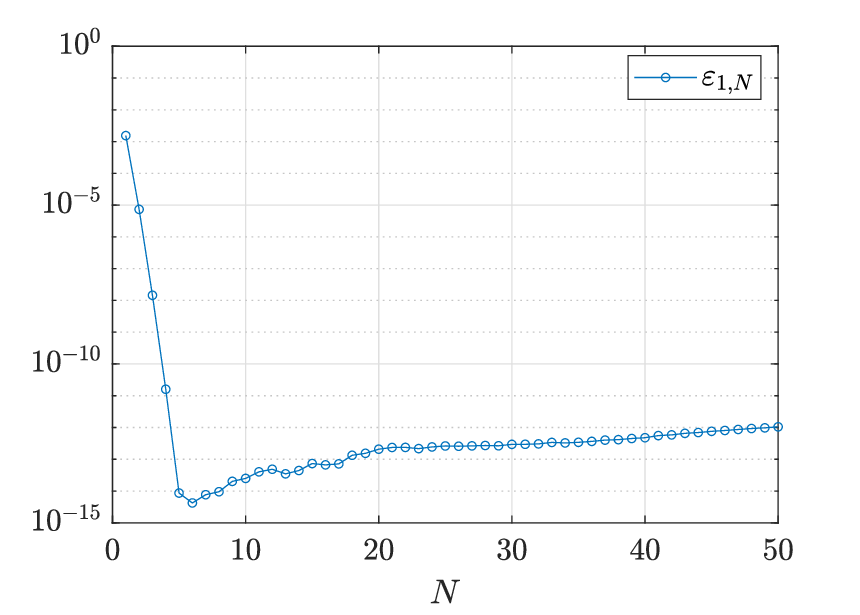}
\par
\end{minipage}
\caption{Real and complex transmission eigenvalues (left) and indicator $\varepsilon_{1,N}$ (right) for $n(r)=16/\left((r+1)(3-r)\right)^{2}$ of Example \ref{exam1}.}
\label{ex1}
\par\end{centering}
\end{figure}

For this example, it is possible to obtain the characteristic equation in closed form. We solved in high precision the direct problem using the root-finding function in Mathematica 11 applied to the closed-form characteristic equation and compared these results to those obtained using our approximation method. This gave us the maximum absolute error of $2.31\times 10^{-7}$ of the transmission eigenvalues presented in Figure \ref{ex1}, demonstrating the accuracy and validity of our method. 
\end{example}

\begin{example}\label{exam2} We consider the three refractive
indices $n_{1}(r)$, $n_{2}(r)$ and $n_{3}(r)$ presented in \cite[Section 5]{CL2}
with the characteristic that they have equal $\delta=\pi/4$ and their real eigenvalues are very close:
\[
n_{1}(r)=\frac{1}{(1+(1-r)^{2})^{2}},\,\,n_{2}(r)=\left(\frac{\pi}{4}\right)^{2},\,\,n_{3}(r)=\left(1+0.4292\left(r-1\right)\right)^{2}.
\]
The potentials under the Liouville transform are $p_{1}(\zeta(r))=-1$, $p_{2}(\zeta(r))=0$
and $p_{3}(\zeta(r))=-4.0714/(1.3299+r)^{4}$. Following a similar approach to the previous example, we minimize (\ref{IndDirect}) to derive $N=7$, $N=3$  and $N=23$ for $p_1, p_2$ and $p_3$ respectively. These in turn correspond to the errors $\displaystyle \min_{N\in[1,50]} \varepsilon_{1,N}=5.22\times 10^{-15}$, $\displaystyle \min_{N\in[1,50]} \varepsilon_{1,N}=5.1\times 10^{-15}$ and $\displaystyle \min_{N\in[1,50]} \varepsilon_{1,N}=4.16\times 10^{-16}$. Figure \ref{fig:EigenvalsColtonLeung2A2017} presents the eigenvalues found in each case. We observe that the real eigenvalues of all refractive indices are close, while the complex eigenvalues exhibit different distributions for each case.

\begin{figure}[H]
\begin{centering}
\begin{minipage}[c][1\totalheight][t]{0.48\textwidth}%
\centering
\includegraphics[scale=0.55]{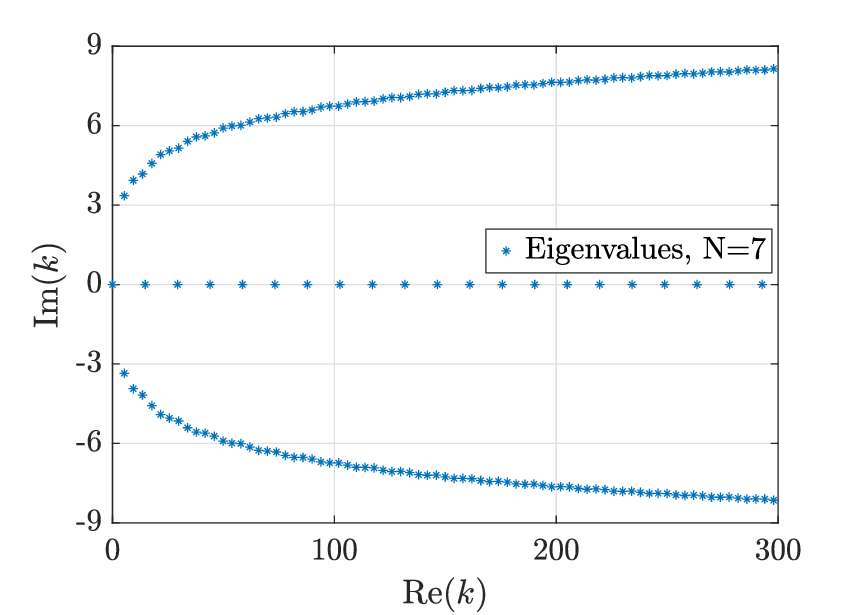}
\end{minipage}
\begin{minipage}[c][1\totalheight][t]{0.48\textwidth}%
\centering
\includegraphics[scale=0.55]{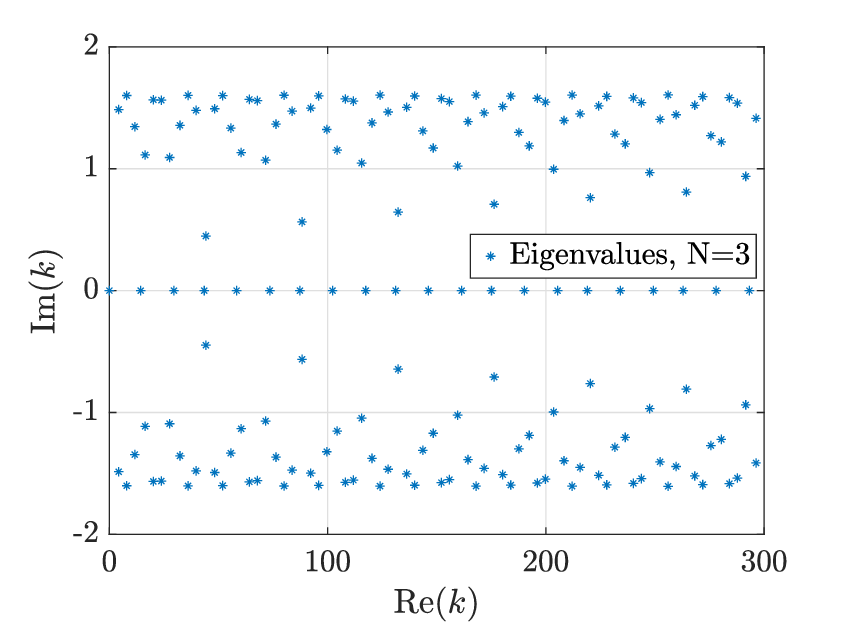}
\end{minipage}\\[2mm] 
\begin{minipage}[c][1\totalheight][t]{0.48\textwidth} 
\centering
\includegraphics[scale=0.55]{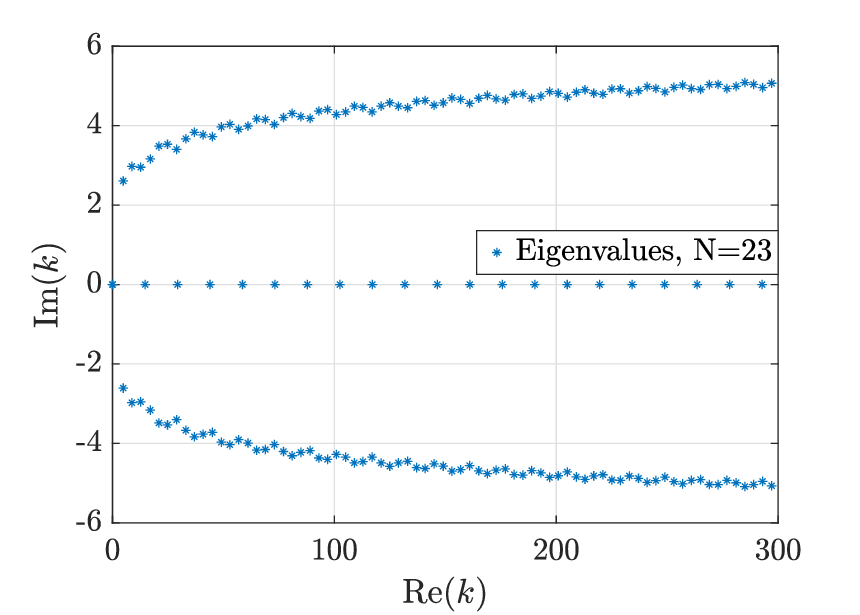}
\end{minipage}
\par\end{centering}
\caption{\label{fig:EigenvalsColtonLeung2A2017}Real and complex transmission eigenvalues corresponding to the refractive indices 
$n_{1}(r)=1/(1+(1-r)^{2})^{2}$ (top left), $n_{2}(r)=(\pi/4)^{2}$ (top right), and
$n_{3}(r)=(1+0.4292(r-1))^{2}$ (bottom) of Example~\ref{exam2}.}
\end{figure}
\end{example}

\begin{example} \label{exam3}
Consider the case of the refractive index \textbf{ $n(r)=1.2 + (1 - r)\sin(2\pi r)$}. The corresponding potential under the Liouville transformation is
\begin{align*}
    p(\zeta(r))&=-\frac{5 (\sin (2 \pi  r)+2 \pi  (r-1) \cos (2 \pi  r))^2}{16 \left(\frac{6}{5}-(r-1) \sin (2 \pi  r)\right)^3}\\
    &-\frac{ \pi  (5 (r-1) \sin (2 \pi  r)-6) (\pi  (r-1) \sin (2 \pi  r)-\cos (2 \pi  r))}{5 \left(\frac{6}{5}-(r-1) \sin (2 \pi  r)\right)^3}
\end{align*}
where variable $\zeta\in[0,\delta]$ with $\delta \approx 1.155384328946918$. We obtain $N=41$, according to $\displaystyle \min_{N\in[1,50]} \varepsilon_{1,N}=1.03\times 10^{-13}$. The estimated eigenvalues are shown in Figure
\ref{ex3}.

\begin{figure}[H]
\begin{centering}
\includegraphics[scale=0.6]{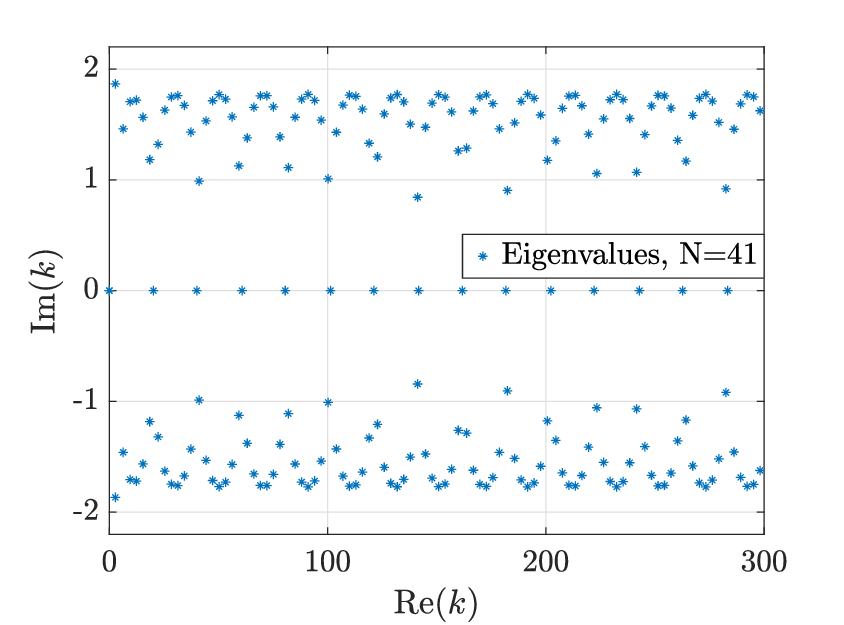}
\par\end{centering}
\caption{Real and complex transmission eigenvalues for $n(r)=1.2 + (1 - r)\sin(2\pi r)$ of Example \ref{exam3}.}
\label{ex3}
\end{figure}

\end{example}

\begin{example}\label{ExampleDelta1}
Assume that \textbf{$n(r)=(r+0.5)^2$}. Then, the corresponding potential
$p(\zeta(r))=-12/(1+2r)^4$ is defined for $\zeta\in[0,\delta]$ with $\delta =1$. The value $N=25$, is derived according to $\displaystyle \min_{N\in[1,50]} \varepsilon_{1,N}=4.44\times 10^{-16}$. Figure \ref{EigensExampleDelta1} presents the calculated transmission eigenvalues. We notice that no complex eigenvalues appear, consistent with the observations made in \cite{CL2} for similar types of refractive indices. 

\begin{figure}[H]
\begin{centering}
\includegraphics[scale=0.6]{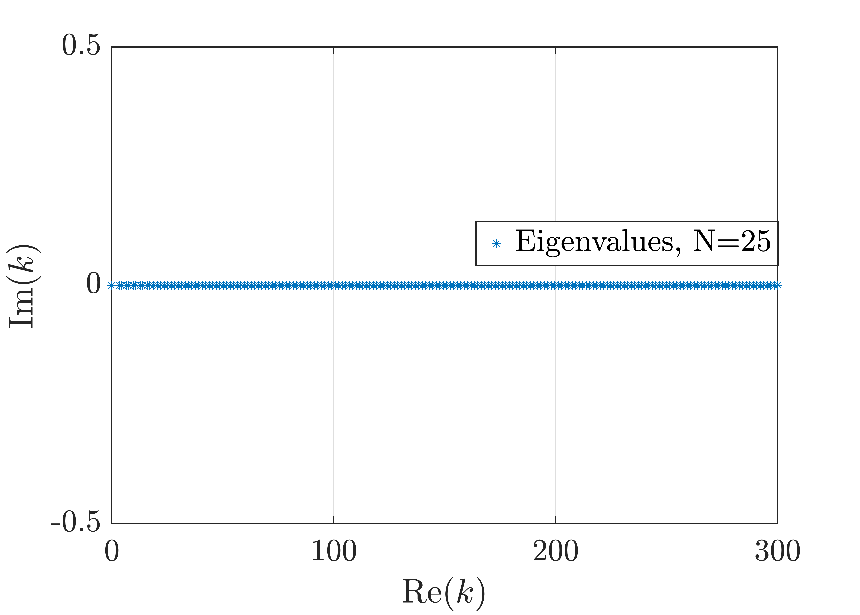}
\par\end{centering}
\caption{Real and complex transmission eigenvalues for $n(r)=(r+0.5)^2$ of Example \ref{ExampleDelta1}.}
\label{EigensExampleDelta1}
\end{figure}
\end{example} 

In all the examples studied above, we observe that real and complex eigenvalues can be computed. This includes as many eigenvalues as needed, even those with high magnitudes. Additionally, we notice that the complex eigenvalues may lie within a fixed strip parallel to the real axis, depending on the values of $\delta,\ n(1)$ and $n^{\prime}(1)$. Such behavior aligns with the discussion in \cite{CL,CLM}.

\subsection{Inverse transmission eigenvalue problem} \label{sec_exampl_inv}

Next, we present numerical examples for the inverse problem, solved using the approach described in Section \ref{sec_comp_nsbf_inv} and the corresponding algorithms. Specifically, we first recover $\delta$, and then reconstruct the unknown refractive index using a few eigenvalues of smallest magnitude. Finally, we also examine the use of spectrum completion and its potential application to the inverse problem. 

In all examples studied, we use a number of input eigenvalues, which are ordered by ascending real part. For the non-real eigenvalues, since they occur in complex conjugate pairs, we require the knowledge of only one member of each pair; its conjugate counterpart is then included in the input by conjugation. Thus, when we say that $J$ eigenvalues are given, this means that this set of $J$ eigenvalues also includes conjugate ones, if any.

\subsubsection{Computation of $\delta$} \label{sec_exampl_inv_delta}

In the following examples, we demonstrate the application of Algorithm \ref{algo_delta} for recovering $\delta$ from transmission eigenvalues. Note that any of the systems \eqref{eq:MainApprox3} or \eqref{relCoefs}-\eqref{NewSystemReduced} can be used in this procedure. We also compare the results with the asymptotic formulas given in (\ref{aformula}) or (\ref{aformula2}), where applicable. 

To perform Algorithm \ref{algo_delta} we consider an initial coarse mesh of $\delta$'s and values of $N$ for which by solving the linear system \eqref{eq:MainApprox3} and finding the minimum as in Remark \ref{RemarkEpsilonAtDelta} we choose a suitable value of $N$. For simplicity and readability, we shall henceforth use the notation $N$ and $\delta$ instead of $N^*$ and $\delta^*$, respectively. Next, once the value of $N$ is fixed, a number of refinements in the grid for $\delta$ are made according to the criterion presented in Remark \ref{RemarkEpsilonAtDelta}. Thus the output of the algorithm is the value of $N$ computed in the first iteration and the value of $\delta$ from the last iteration. 

\begin{example} \label{exam5}
We recover $\delta$ for the refractive index considered in Example \ref{exam1}, using eigenvalues computed from the NSBF representations. The unknown $\delta$ can be estimated from the lowest 10 real eigenvalues using formula (\ref{aformula}), as demonstrated in Figure~\ref{delta_ex5}. The resulting absolute error is $1.35\times 10^{-6}$.  

\begin{figure}[H]
        \centering
        \includegraphics[scale=0.55]{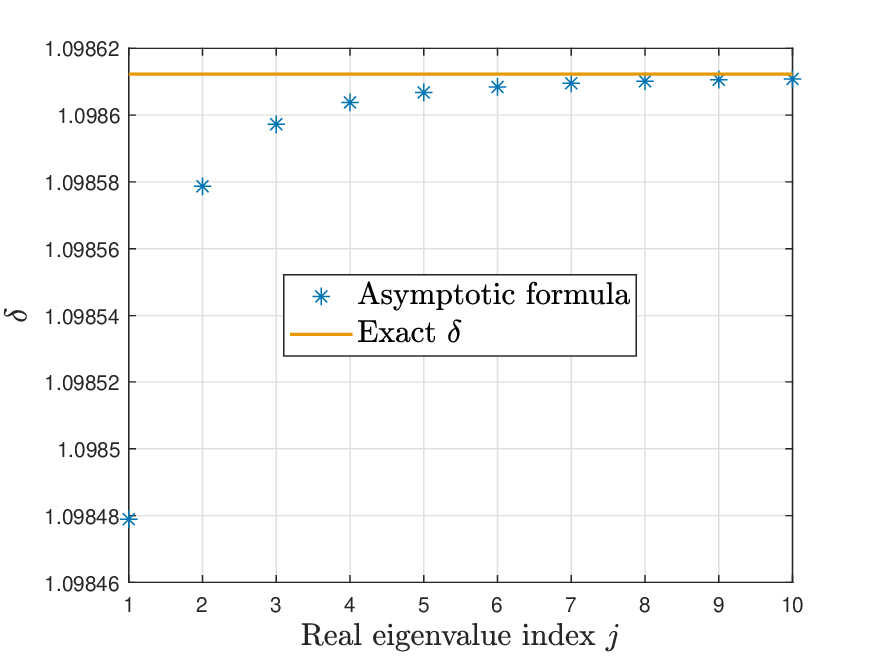}
    \caption{ Asymptotic formula for $\delta$ of Example \ref{exam5}.}
    \label{delta_ex5}
\end{figure}

Besides this approach, we approximate $\delta$ applying Algorithm \ref{algo_delta}. The input data are $10$ lowest magnitude (complex) transmission eigenvalues and the array $N=[3\ 4\ 5]$. The minimization of the indicator $\varepsilon_{1,N}$ was performed to obtain $N=5$ and to approximate $\delta$ with an absolute error of $2.24\times 10^{-12}$. See Figure \ref{deltaNsbf_ex5} for the behavior of the indicator in the first and last iterations.  

\begin{figure}[H]
        \centering
        \includegraphics[scale=0.55]{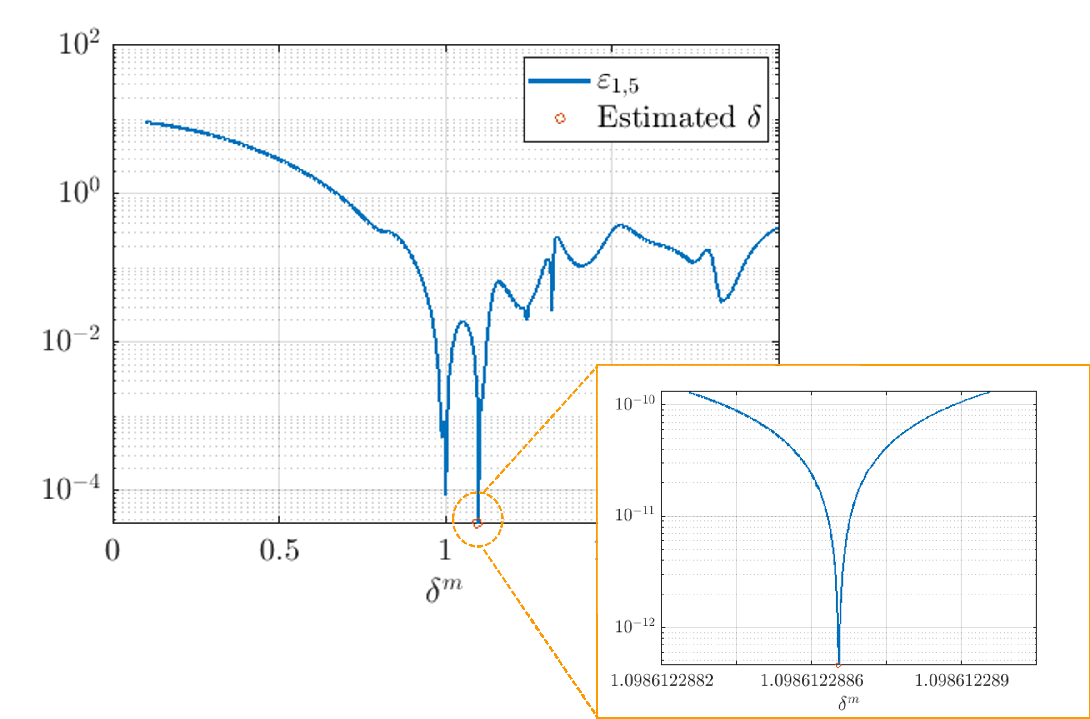} 
    \caption{ Approximation of $\delta$ of Example \ref{exam5} by using NSBF series, from the first and the last iterations.}
    \label{deltaNsbf_ex5}
\end{figure}
\end{example}

\begin{example} \label{exam6}
We now study the recovery of $\delta$ corresponding to the refractive index of Example~\ref{exam3}. The unknown $\delta$ is first estimated using two approaches based on the density formula~(\ref{aformula2}), as illustrated in Figure~\ref{delta_ex6}. Both methods utilize all the $204$ eigenvalues within the strip $0 < \operatorname{Re} k < \mathcal{R} = 300$ (shown in Figure \ref{ex3}).

The first approach directly applies the density formula to the largest $\mathcal{N}$-value in the dataset. Specifically, $\delta$ is approximated using $ \tilde{\delta} =\mathcal{N} \pi/\mathcal{R} - 1$, where $\mathcal{R}$ is the corresponding magnitude of the eigenvalue. This calculation yields an estimated $ \tilde{\delta}$, with an absolute error of $6.06 \times 10^{-3}$ to the exact value (Figure~\ref{delta_ex6}, left). In the second approach, a grid search is performed over the interval $[0.1, 2]$ to minimize the mean absolute error between the approximate $\tilde{\delta}$ values and candidate $\delta_{grid}$ values. Through this minimization, the estimated $\delta_{grid}$ has an absolute error of $2.04 \times 10^{-2}$ to the exact value (Figure~\ref{delta_ex6}, right).

\begin{figure}[H]
\begin{centering}
    \begin{minipage}[c][1\totalheight][t]{0.48\textwidth}
        \centering
        \includegraphics[scale=0.55]{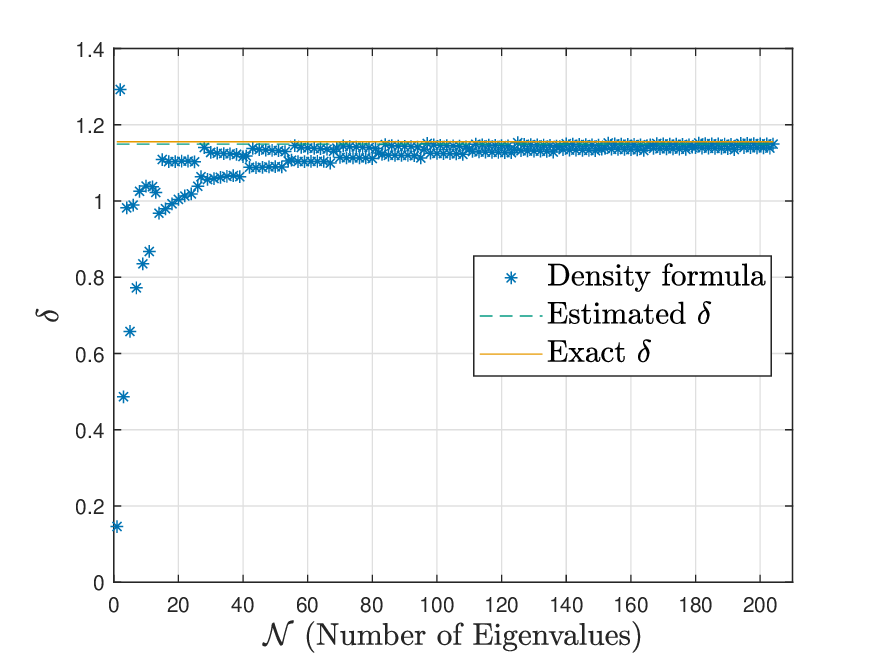}
    \end{minipage}
    \begin{minipage}[c][1\totalheight][t]{0.48\textwidth}
        \centering
        \includegraphics[scale=0.55]{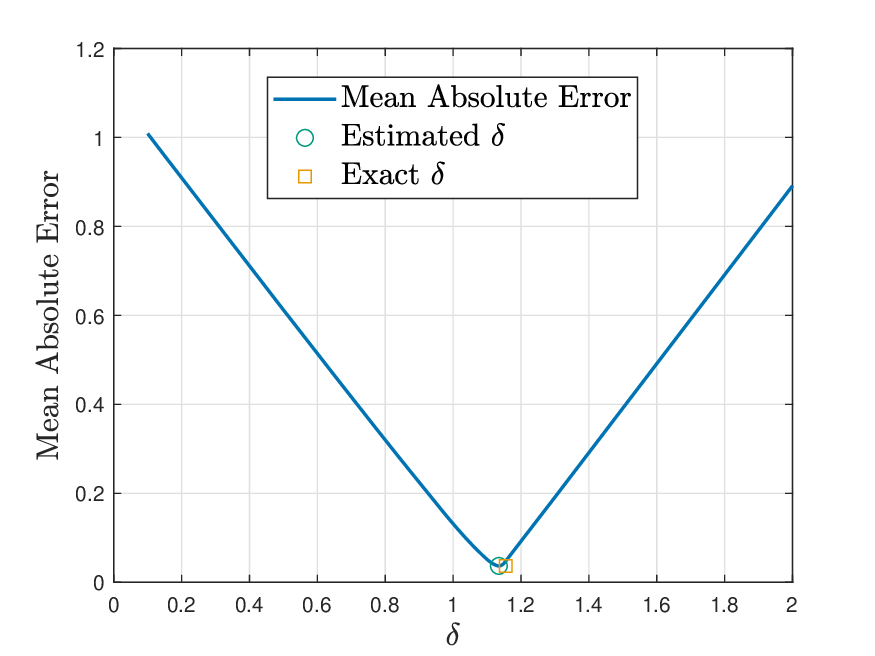}
    \end{minipage}
     \caption{ Approximation density formula for $\delta$ (left) and error minimization (right) of Example \ref{exam6}.}
    \label{delta_ex6}
\end{centering}
\end{figure}

Additionally, we estimate $\delta$ using Algorithm \ref{algo_delta}. In the first test, the input data consist of the 40 transmission eigenvalues with the lowest magnitudes (a considerably smaller set), and the grid $N=[8 \ 10 \ 15 \ 18 \ 20]$. In the second test, we use 150 transmission eigenvalues and the grid $N=[8 \ 10 \ 14 \ 18 \ 22 \ 26]$. Using 40 eigenvalues results in $N=15$  and an absolute error of $6.05\times 10^{-7}$ for $\delta$. With 150 input eigenvalues, we obtain $N = 18$ and an absolute error of $2.83 \times 10^{-11}$ for $\delta$. Figure \ref{deltaNsbf_ex6} shows the indicators in the final iteration for each test.  

\begin{figure}[H]
\begin{centering}
    \begin{minipage}[c][1\totalheight][t]{0.48\textwidth}
        \centering
        \includegraphics[scale=0.55]{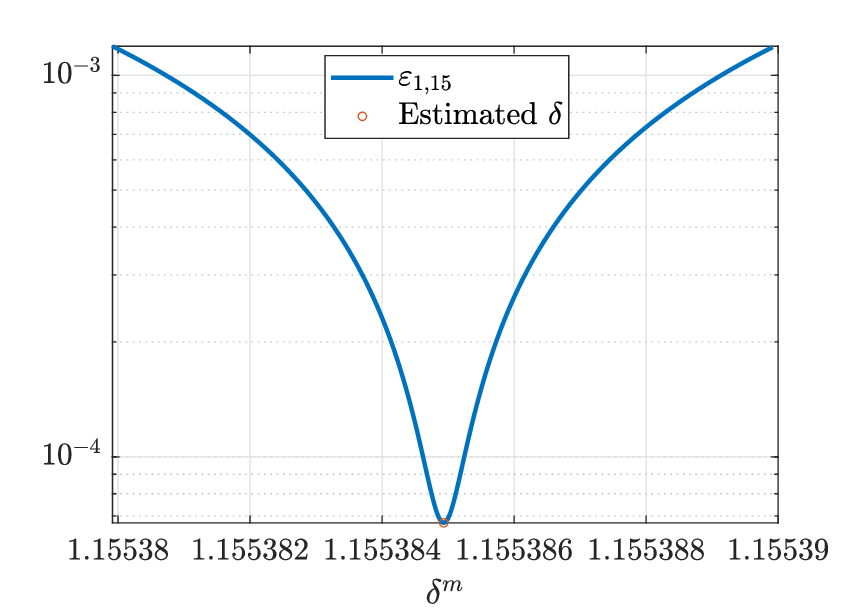}
    \end{minipage}
    \begin{minipage}[c][1\totalheight][t]{0.48\textwidth}
        \centering
        \includegraphics[scale=0.55]{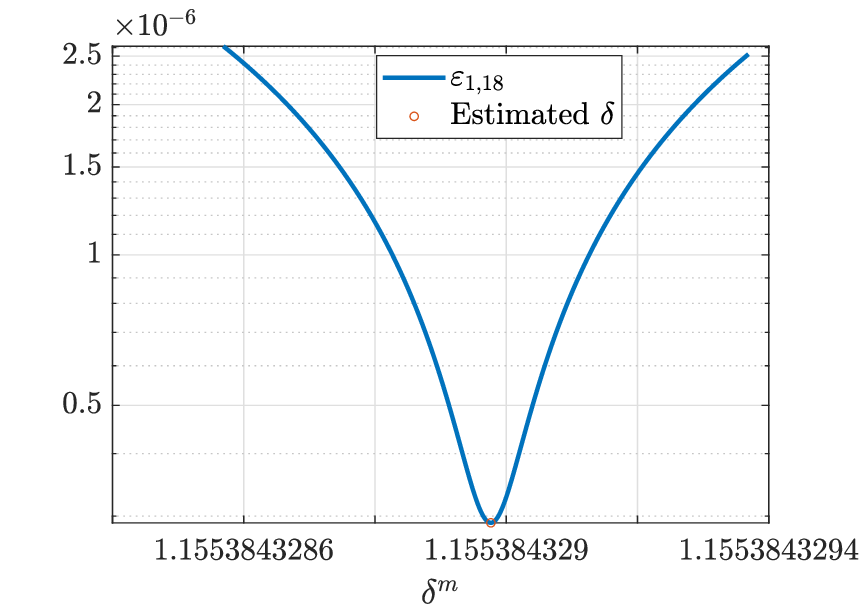}
    \end{minipage}
     \caption{ Approximation of $\delta$ from NSBF representations by using 40 eigenvalues (left) and by using 150 eigenvalues (right) of Example \ref{exam6}.}
   \label{deltaNsbf_ex6}
\end{centering}
\end{figure}

\end{example}

\begin{example} \label{exam7}
We recover $\delta=1$ corresponding to the refractive index of Example~\ref{ExampleDelta1} using Algorithm \ref{algo_delta}. Note that neither formula \eqref{aformula} nor \eqref{aformula2} are applicable in this case. Here, the input data of Algorithm \ref{algo_delta} consist of the $8$ lowest magnitude transmission eigenvalues and $N=[2 \ 3 \ 4]$. The output of the algorithm is $N=3$ and $\delta$ computed with an absolute error of $1.91\times10^{-4}$, presented in Figure \ref{deltaNsbf_ex7}.

\begin{figure}[h!]
        \centering
        \includegraphics[scale=0.55]{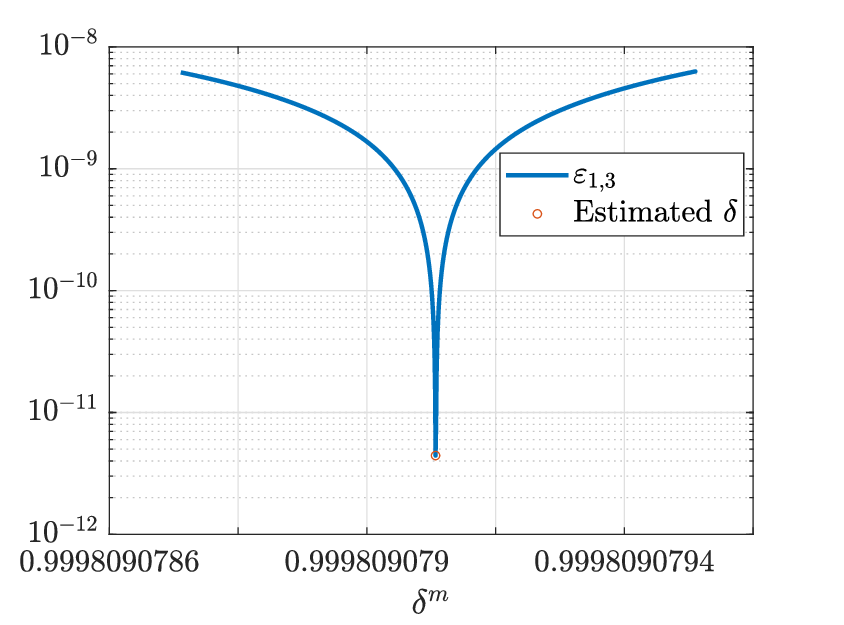}
    \caption{ Approximation of $\delta$ of Example \ref{exam7} by using NSBF series.}
    \label{deltaNsbf_ex7}
\end{figure}
\end{example}

From the above results, we notice that $\delta$ is recovered with high accuracy even when only a few eigenvalues are available, in contrast to the asymptotic formulas, which require more data and are not always applicable. Furthermore, in the subsequent section where we consider the reconstruction of refractive indices from transmission eigenvalues, we explore the approximation of $\delta$ in more detail as the number of input eigenvalues varies.

\subsubsection{Numerical solution of the inverse problem}  \label{sec_exampl_inv_index}

In this section, we apply Algorithm \ref{algo_inv} to solve the inverse TEP and demonstrate its efficiency in accurately recovering the refractive index. We evaluate the algorithm's performance by gradually increasing the number of eigenvalues employed. Moreover, the unknown parameter $\delta$ is estimated for each example using Algorithm \ref{algo_delta}, which also allows us to observe how the recovery of $\delta$ evolves with the increasing number of eigenvalues. 

We note that treating  $\delta$ as an additional unknown substantially increases the complexity of our inverse problem algorithm. Since $\delta$ defines the right endpoint of the Sturm–Liouville interval in \eqref{eq:SL} and enters every key relation—such as the system \eqref{NewSystemReduced}—even small errors in its approximation can destabilize the refractive index reconstruction. Nonetheless, as we demonstrate below, our approach successfully mitigates these difficulties.

\begin{example} \label{exam8} 
By using the lowest $10$ complex eigenvalues computed from the NSBF representations
in Example \ref{exam1}, the refractive index  $n(r)=16/\left((r+1)(3-r)\right)^{2}$ is recovered using three different values of $\delta$:  two previously determined in Example \ref{exam5}, and the exact value. See Figure \ref{inv1} for the reconstructions and absolute errors.  As observed, solving the inverse problem is particularly sensitive to the value of $\delta$.

\begin{figure}[H]
\begin{minipage}{0.49\textwidth}
        \centering
        \includegraphics[scale=0.6]{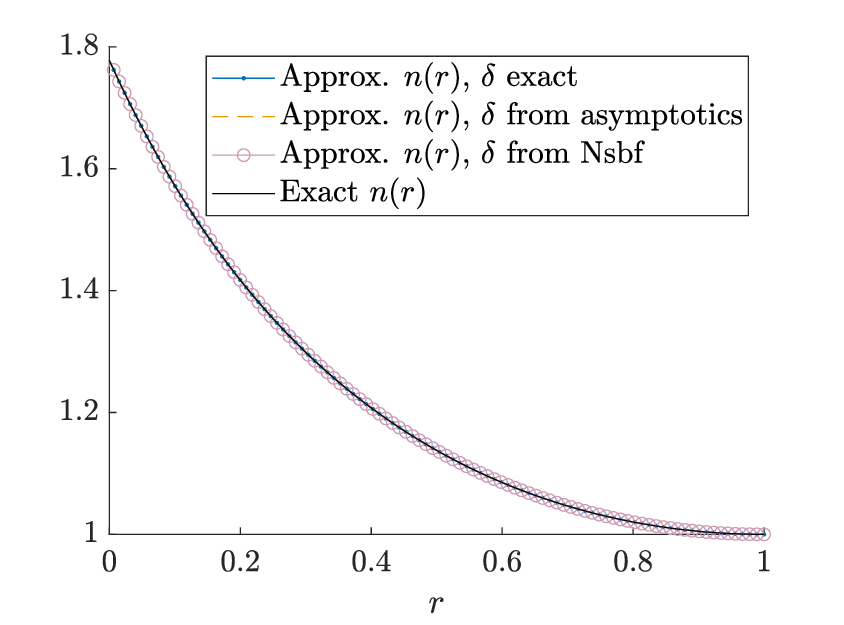}
    \end{minipage}
    \hfill
        \begin{minipage}{0.49\textwidth}
        \centering
        \includegraphics[scale=0.6]{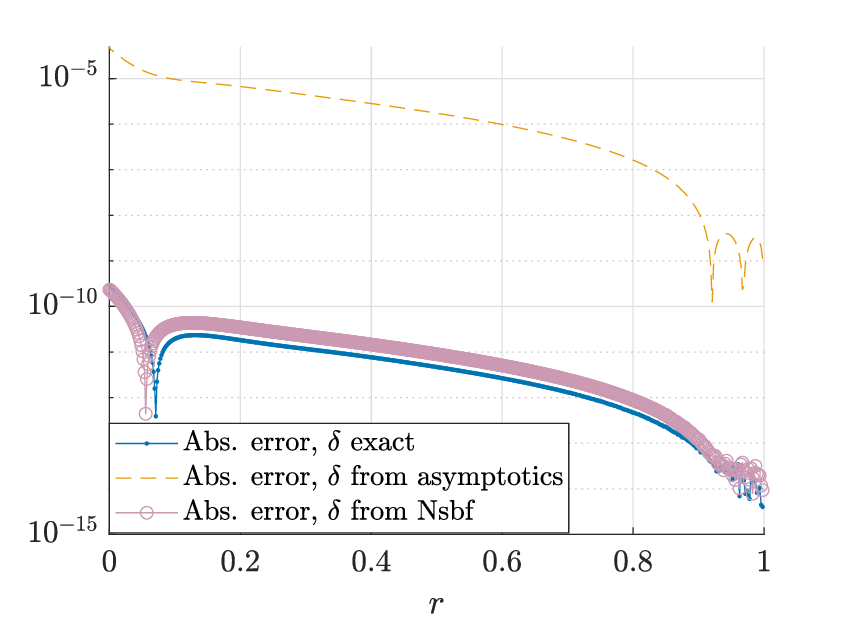}
         \end{minipage}
        \caption{ Recovered refractive index $n(r)=16/\left((r+1)(3-r)\right)^{2}$ from $10$ eigenvalues (left) and absolute error of the reconstruction (right) for different values of $\delta$ of Example \ref{exam8}.}
    \label{inv1}
\end{figure}

Additionally, we consider different cases for the number of given eigenvalues.  Using the corresponding $\delta$ approximations shown in Table \ref{table:1}, we then recover the refractive index, as presented in Figure \ref{invExample1}.

\begin{table}[H]
\centering
\caption{Approximation of $\delta$ from an increasing number of eigenvalues for Example \ref{exam8}.}
\label{table:1}
\begin{tabular}{lcccc} 
  \toprule
  Number of eigs &6 & 10 & 30 & 50  \\
  \midrule
  Number of coefs $N$ & 3 & 5  & 6  & 9   \\
  Abs. Error $\delta$     & $7.82\times 10^{-8}$ & $2.24\times 10^{-12}$ & $1.58\times 10^{-13}$ & $9.59\times 10^{-13}$ \\
  \bottomrule
\end{tabular}
\end{table}

\begin{figure}[H]
\begin{minipage}{0.49\textwidth}
        \centering
        \includegraphics[scale=0.6]{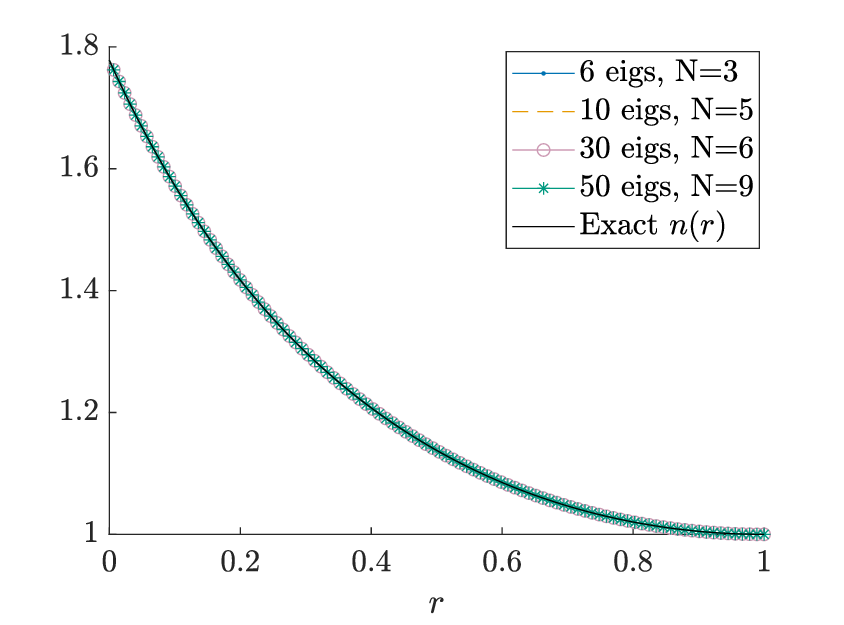}
    \end{minipage}
    \hfill
        \begin{minipage}{0.49\textwidth}
        \centering
        \includegraphics[scale=0.6]{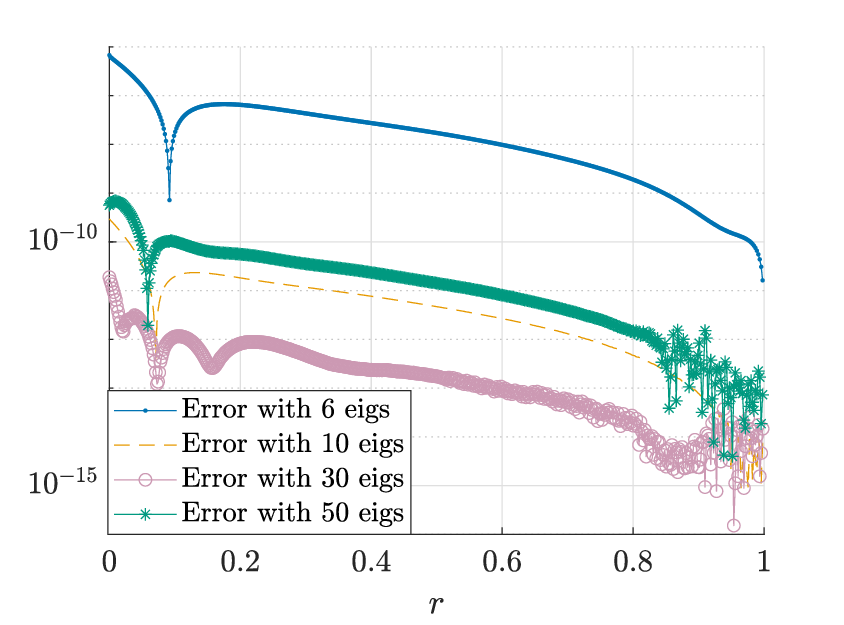}
         \end{minipage}
        \caption{ Recovered refractive index $n(r)=16/\left((r+1)(3-r)\right)^{2}$ (left) from $6$, $10$, $30$ and $50$ eigenvalues and absolute error of the reconstruction (right) of Example \ref{exam8}.}
    \label{invExample1}
\end{figure}

\end{example}

\begin{example} \label{exam9}
We study the inverse problems to recover the refractive indices considered in Example \ref{exam2}, by using both real and complex eigenvalues from the NSBF representations. 
For $n_1(r)=1/(1+(1-r)^{2})^{2}$, the approximation of $\delta$ using an increasing number of eigenvalues is given in Table \ref{table:2}. 

\begin{table}[H]
\centering
\caption{Approximation of $\delta$ from an increasing number of eigenvalues for $n_1(r)$ of Example ~\ref{exam9}.}
\label{table:2}
\begin{tabular}{lcccc} 
  \toprule
  Number of eigs      & 7 & 8 & 9 & 10 \\ 
  \midrule
  Number of coefs $N$  & 2 & 3 & 3 & 4 \\ 
  Abs. Error $\delta$ & $1.3\times10^{-4}$ & $3.25\times10^{-7}$ & $3.48\times10^{-7}$ & $5.23\times10^{-9}$ \\ 
  \bottomrule
\end{tabular}
\end{table}

In Figure ~\ref{inv_n1}, we present the reconstructions for the refractive index $n_1(r)$ and the corresponding absolute errors. 

\begin{figure}[H]
    \begin{minipage}{0.49\textwidth}
        \centering
        \includegraphics[scale=0.6]{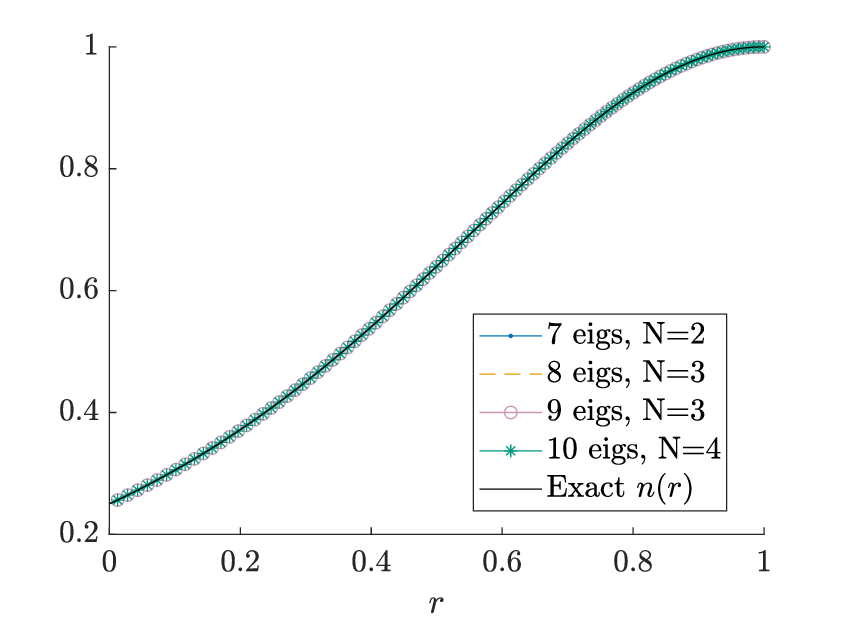}
    \end{minipage}
    \hfill
    \begin{minipage}{0.49\textwidth}
        \centering
        \includegraphics[scale=0.6]{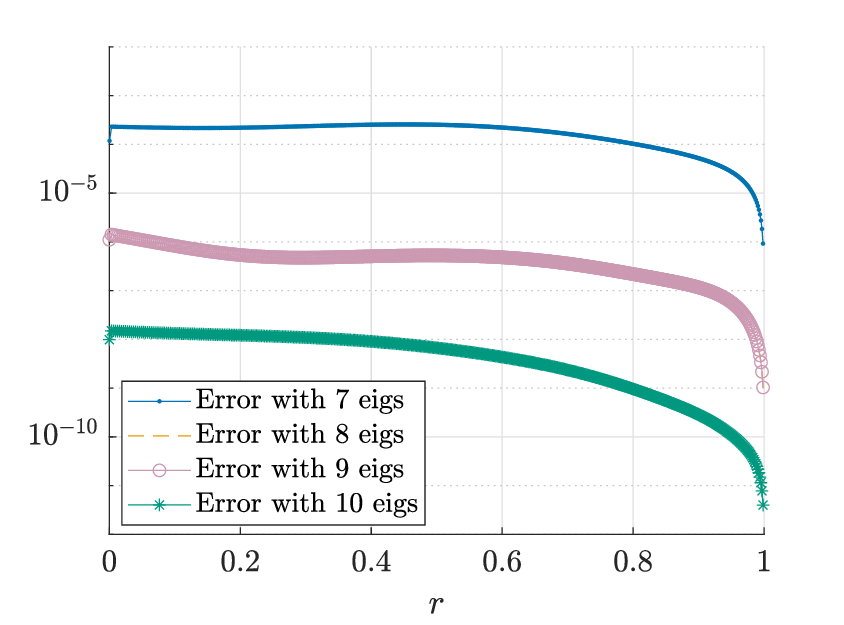}
    \end{minipage}
    \caption{  Recovered refractive index $n_1(r)=1/(1+(1-r)^{2})^{2}$ from $7$, $8$, $9$ and $10$ eigenvalues (left) and absolute error of the reconstruction (right) of Example \ref{exam9}  (the error curves for $8$ and $9$ eigenvalues are overlapping).}
    \label{inv_n1}
\end{figure}

For $n_2(r)=\left(\pi/4\right)^{2}$, the recovery of $\delta$ is presented in Table  \ref{table:3}. Furthermore, 
the reconstructions of the refractive index and the corresponding absolute errors are shown in Figure ~\ref{inv_n2}. 

\begin{table}[H]
\centering
\caption{Approximation of $\delta$ from an increasing number of eigenvalues for $n_2(r)$ of Example \ref{exam9}.}
\label{table:3}
\begin{tabular}{lccccc}
  \toprule
  Number of eigs & 6 & 7 & 8 & 9 & 10 \\
  \midrule
  Number of coefs $N$ & 2 & 2 & 3 & 4 & 4 \\
  Abs. Error $\delta$ & 0.53 & $2.44\times10^{-15}$ & $2.55\times10^{-15}$ & $2.44\times10^{-15}$ & $2.66\times10^{-15}$ \\
  \bottomrule
\end{tabular}
\end{table}

\begin{figure}[H]
    \begin{minipage}{0.49\textwidth}
        \centering
        \includegraphics[scale=0.6]{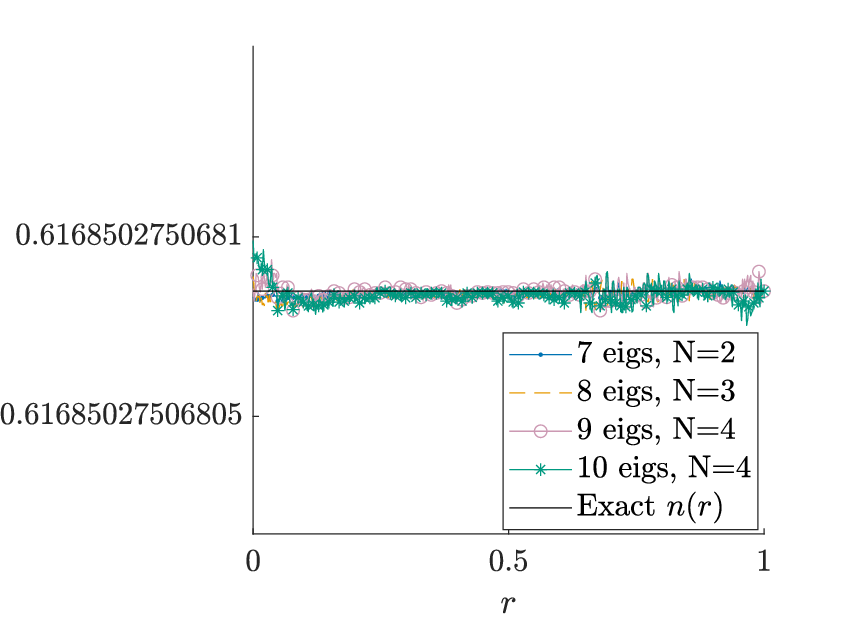}
    \end{minipage}
    \hfill
    \begin{minipage}{0.49\textwidth}
        \centering
        \includegraphics[scale=0.6]{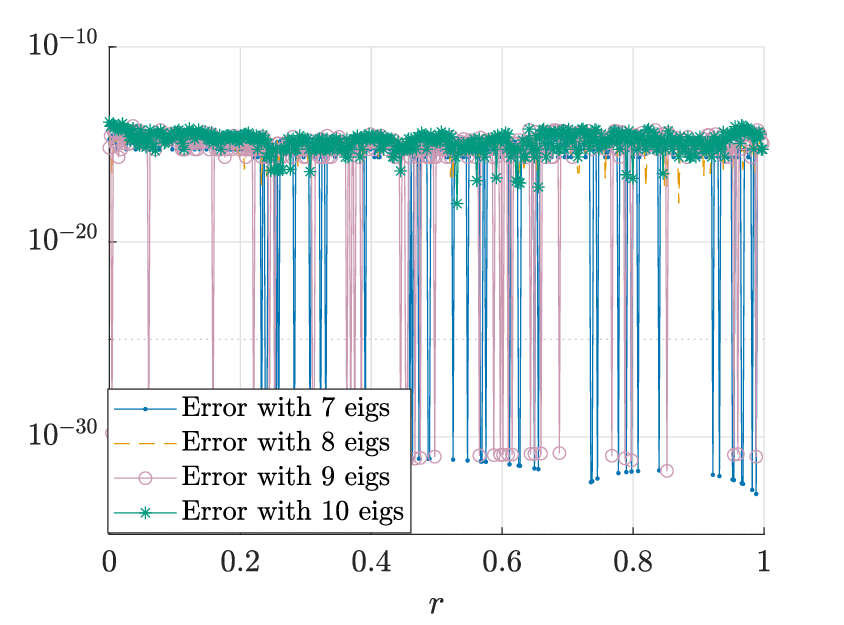}
    \end{minipage}
    \caption{  Recovered refractive index $n_2(r)=\left(\pi/4\right)^{2}$ from $7$, $8$, $9$ and $10$ eigenvalues (left) and absolute error of the reconstruction (right) of Example \ref{exam9}.}
    \label{inv_n2}
\end{figure}

Finally, for $n_3(r)=\left(1+0.4292\left(r-1\right)\right)^{2}$, we refer to Table \ref{table:4} for the approximation of  $\delta$, and to Figure \ref{inv_n3} for the reconstructions of the refractive index and the corresponding absolute errors. 

\begin{table}[H]
\centering
\caption{Approximation of $\delta$ from an increasing number of eigenvalues for $n_3(r)$ of Example \ref{exam9}.}
\label{table:4}
\begin{tabular}{lccccc}
 \toprule
Number of eigs & 5 & 6 & 7 & 8 & 9 \\
\midrule
Number of coefs $N$ & 1 & 2 & 2 & 4 & 4 \\
Abs. Error $\delta$ & $4.7\times10^{-3}$ & $3.6\times10^{-4}$ & $3.39\times10^{-4}$ & $1.66\times10^{-6}$ & $7.99\times10^{-5}$ \\
\bottomrule
\end{tabular}
\end{table}

\begin{figure}[H]
    \begin{minipage}{0.49\textwidth}
        \centering
        \includegraphics[scale=0.6]{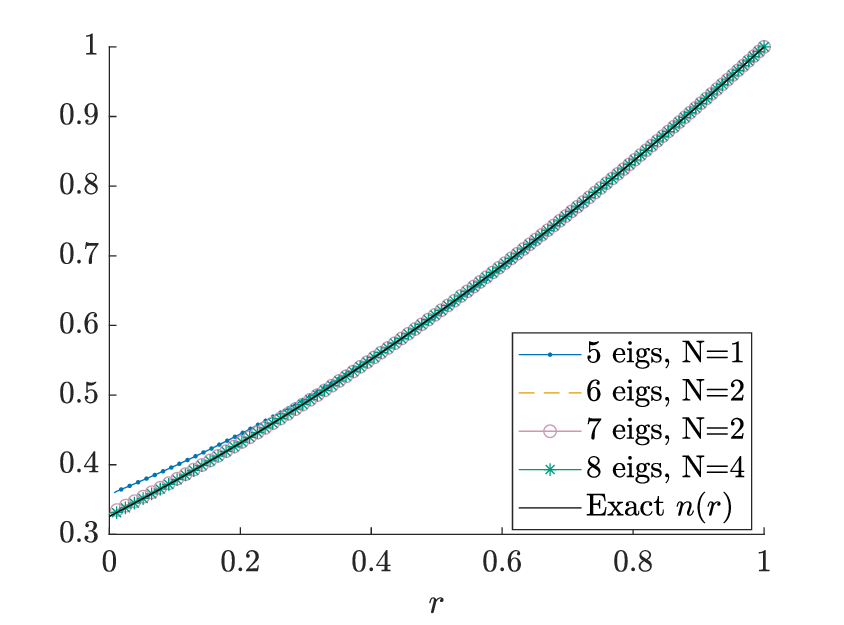}
    \end{minipage}
    \hfill
    \begin{minipage}{0.49\textwidth}
        \centering
        \includegraphics[scale=0.6]{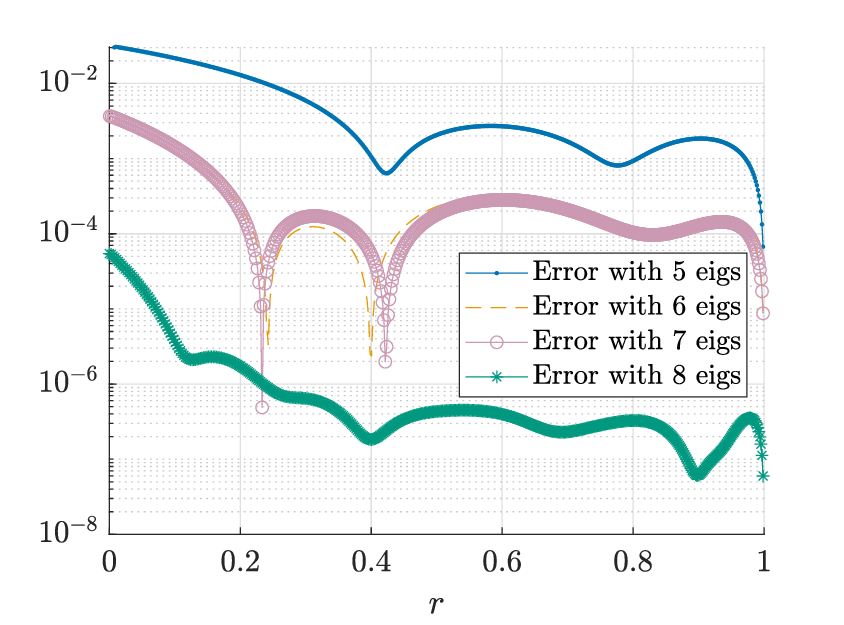}
    \end{minipage}
    \caption{  Recovered refractive index $n_3(r)=\left(1+0.4292\left(r-1\right)\right)^{2}$ from $5$, $6$, $7$ and $8$ eigenvalues (left) and absolute error of the reconstruction (right) of Example \ref{exam9}.}
    \label{inv_n3}
\end{figure}
\end{example}

\begin{example} \label{exam10}
We now study the reconstruction of the refractive index  $n(r)=1.2 + (1 - r)\sin(2\pi r)$. The eigenvalues are calculated from the NSBF representations in Example \ref{exam3}. As in the previous examples, we consider an increasing number of input eigenvalues, from which we first approximate $\delta$ as presented in Table \ref{table:5}. Then, the corresponding reconstructions and their absolute errors are given in Figure \ref{invexam10}.

\begin{table}[H]
\centering
\caption{Approximation of $\delta$ from an increasing number of eigenvalues for Example \ref{exam10}.}
\label{table:5}
\begin{tabular}{lccccc} 
\toprule
Number of eigs & 30 & 40 & 50 & 100 & 150 \\
\midrule
Number of coefs $N$   & 14 & 15 & 16 & 13  & 18 \\
Abs. Error $\delta$    & $4.07\times10^{-6}$ & $6.05\times10^{-7}$ & $4.49\times10^{-8}$ & $9.53\times10^{-10}$ & $2.83\times10^{-11}$ \\
\bottomrule
\end{tabular}
\end{table}

\begin{figure}[H]
\begin{minipage}{0.49\textwidth}
        \centering
        \includegraphics[scale=0.6]{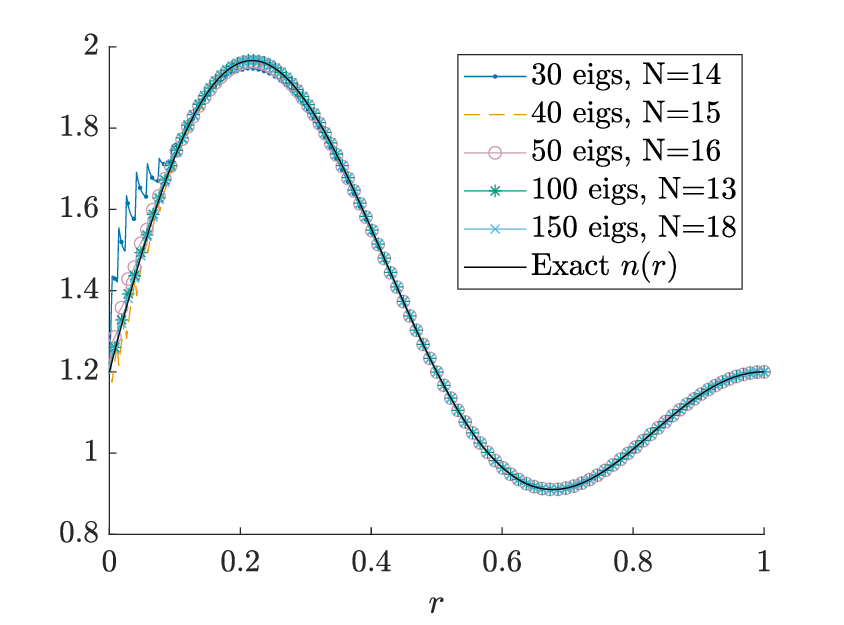}
    \end{minipage}
    \hfill
        \begin{minipage}{0.49\textwidth}
        \centering
        \includegraphics[scale=0.6]{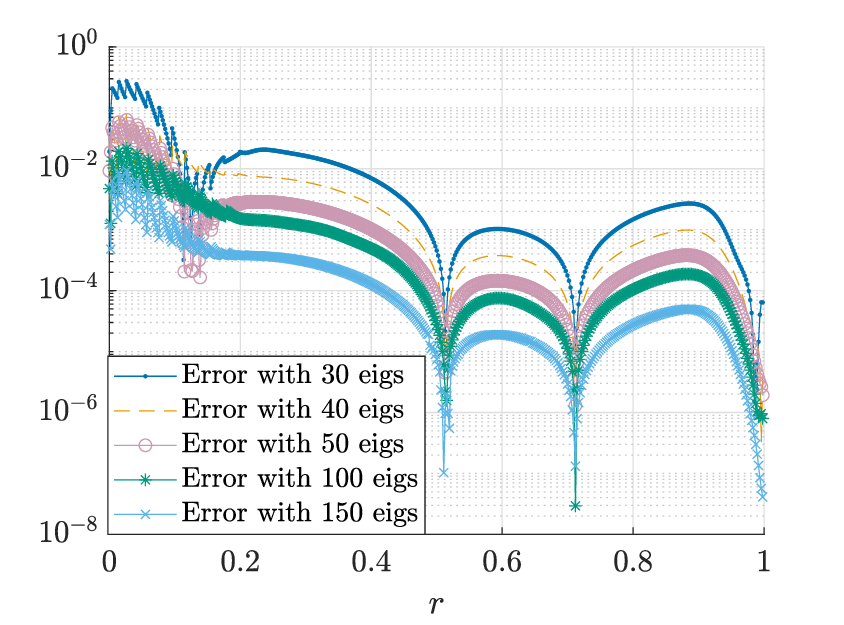}
         \end{minipage}
        \caption{ Recovered refractive index $n(r)=1.2 + (1 - r)\sin(2\pi r)$ from $30,\ 40,\ 50,\ 100$ and $150$ eigenvalues (left) and absolute error of the reconstruction (right) of Example \ref{exam10}.}
    \label{invexam10}
\end{figure}
\end{example}

\begin{example} \label{exam11} In this example, we consider the inverse problem for the refractive index  $n(r)=(r+0.5)^2$, where the corresponding eigenvalues are obtained in Example \ref{ExampleDelta1}. Only real eigenvalues are used in this case. The approximations of $\delta$ are shown in Table \ref{table:6} and the respective reconstructions of the refractive index in Figure \ref{invexam11}. 

\begin{table}[H]
\centering
\caption{Approximation of $\delta$ from an increasing number of eigenvalues for Example \ref{exam11}.}
\label{table:6}
\begin{tabular}{lccccc}
\toprule
Number of eigs & 8 & 9 & 10 & 11 & 12 \\
\midrule
Number of coefs N   & 3 & 4 & 4  & 5  & 4  \\
Abs. Error $\delta$ & $1.91\times10^{-4}$ & $5.46\times10^{-5}$ & $3.75\times10^{-5}$ & $8.32\times10^{-5}$ & $1.2\times10^{-5}$ \\
\bottomrule
\end{tabular}
\end{table}

\begin{figure}[H]
\begin{minipage}{0.49\textwidth}
        \centering
        \includegraphics[scale=0.6]{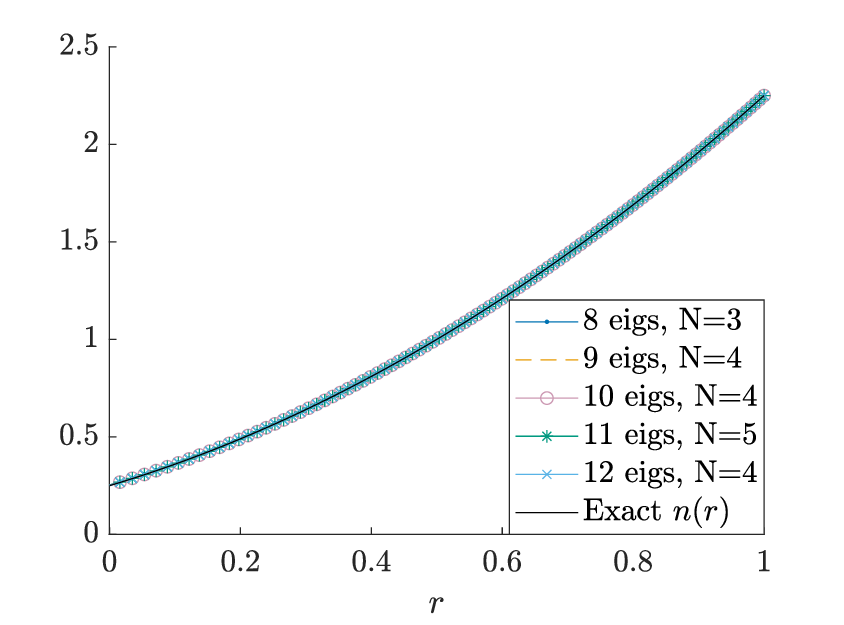}
    \end{minipage}
    \hfill
        \begin{minipage}{0.49\textwidth}
        \centering
        \includegraphics[scale=0.6]{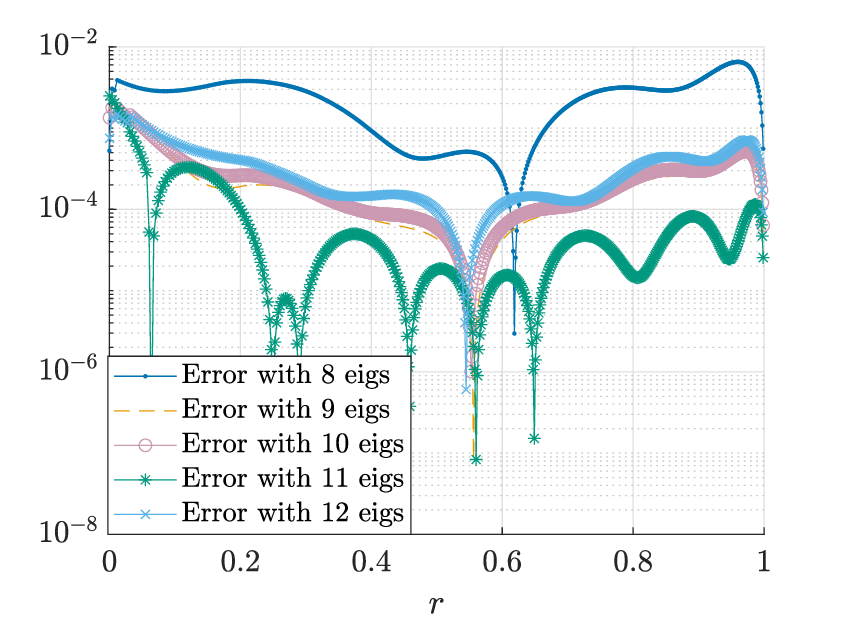}
         \end{minipage}
        \caption{ Recovered refractive index $n(r)=(r+0.5)^2$ from $8,\ 9,\ 10,\ 11$ and $12$ eigenvalues (left) and absolute error of the reconstruction (right) of Example \ref{exam11}.}
    \label{invexam11}
\end{figure}
\end{example}

We note that in all the inverse problem examples studied above, no a priori assumptions on the value of $\delta$ or the sign of $1-n(r)$ are imposed. The only input data used are the transmission eigenvalues, along with the values of $n(1)$ and $n^{\prime}(1)$. As found in the literature, uniqueness theorems for the inverse problem typically require prior knowledge on the sign of $1-\delta$ or $1-n(r)$; see \cite[Section 4]{Pallreview} and the references therein for more details. 

\subsubsection{Application of spectrum completion and the inverse problem} \label{sec_exampl_inv_scompl}

We now present numerical examples to examine spectrum completion and the associated inverse problems. We consider various input-eigenvalue scenarios: real only, non-real only, and both real and complex. Focusing on real eigenvalues is of particular interest for the inverse TEP, since sampling methods based on far-field scattering data can detect only real eigenvalues \cite{CCHdeter}. Thus, using real eigenvalue inputs and completing the complex spectrum accordingly can be useful for applications where only real measurements are available.

\begin{example} \label{exam12}
We consider the spectrum completion corresponding to the refractive index of Example \ref{exam1}. As input, we use the five complex eigenvalues of smallest magnitude computed in Example \ref{exam1}. Using these five eigenvalues, we approximate $\delta$ with an absolute error of $1.25\times 10^{-5}$. Note that from these eigenvalues, we recover both the real and the following non-real eigenvalues, see Figure \ref{completion_ex12}.

To confirm the accuracy of the spectrum completion, we calculate the error with respect to the eigenvalues obtained from the closed-form characteristic equation, as described in Example \ref{exam1}. The maximum absolute error is $4.05\times 10^{-2}$.

\begin{figure}[H]
        \centering
        \includegraphics[scale=0.6]{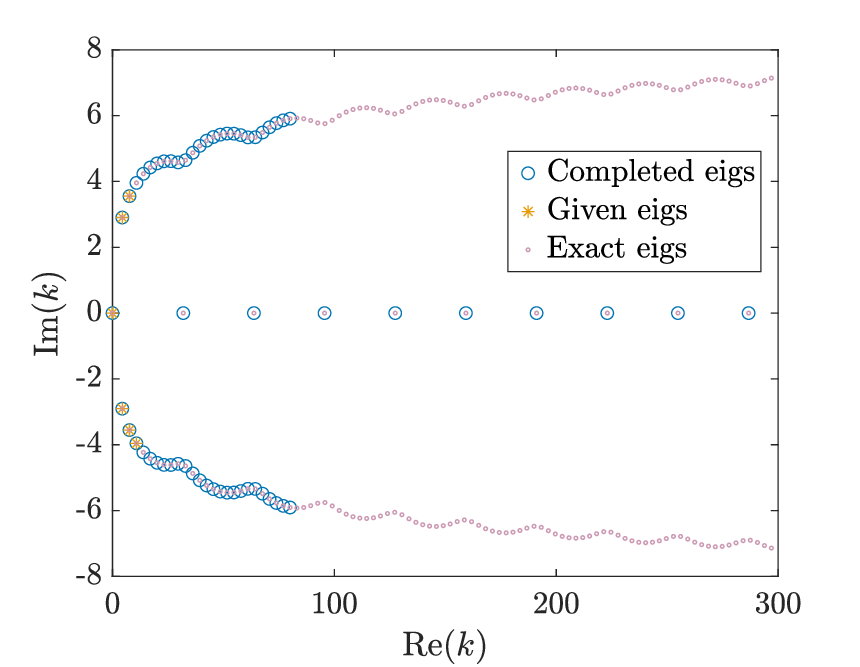}
        \caption{Spectrum completion from 5 complex eigenvalues of Example \ref{exam12}.}
    \label{completion_ex12}
\end{figure}

For this example, using the lowest 5 real eigenvalues, the absolute error of $\delta$ recovered by Algorithm \ref{algo_delta} resulted in $9.34\times 10^{-1}$.
This accuracy was not sufficient for an accurate enough spectrum completion. However, considering the same set of eigenvalues with the exact value of $\delta$, Algorithm \ref{algo_sc} was applied resulting in a reliable completion as shown in Figure \ref{CompletionEx1} (left). The maximum absolute error of this spectrum completion, with respect to the eigenvalues of the closed-form characteristic function is $9.35 \times 10^{-3}$. 

Furthermore, the refractive index was accurately recovered by using the $5$ real eigenvalues plus $11$ more completed eigenvalues. The maximum absolute error with the completed spectrum decreased compared to that obtained using only the $5$ real eigenvalues,  Figure \ref{CompletionEx1} (right). 

\begin{figure}[H]
\begin{minipage}{0.49\textwidth}
        \centering
        \includegraphics[scale=0.6]{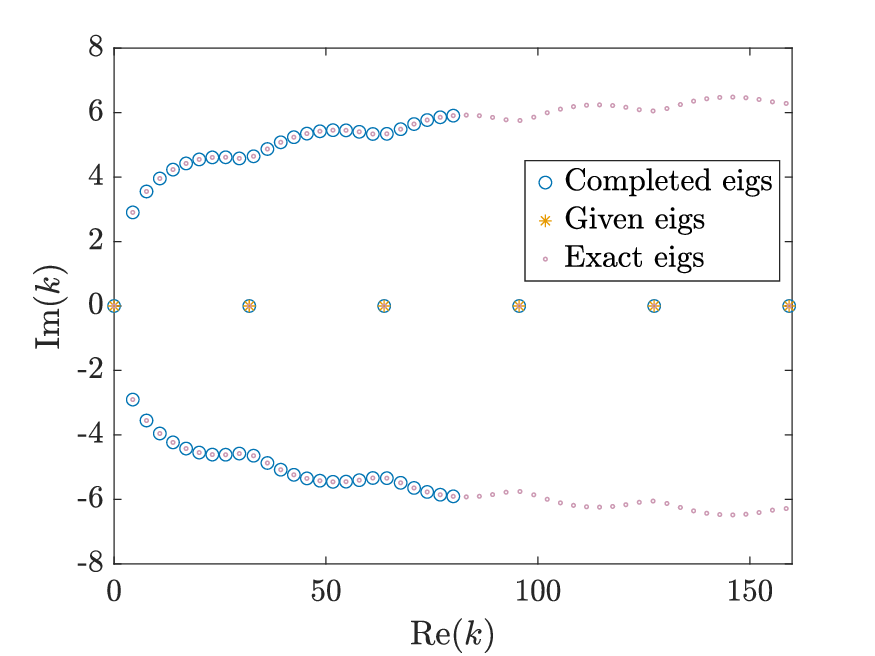}
    \end{minipage}
    \hfill
        \begin{minipage}{0.49\textwidth}
        \centering
        \includegraphics[scale=0.6]{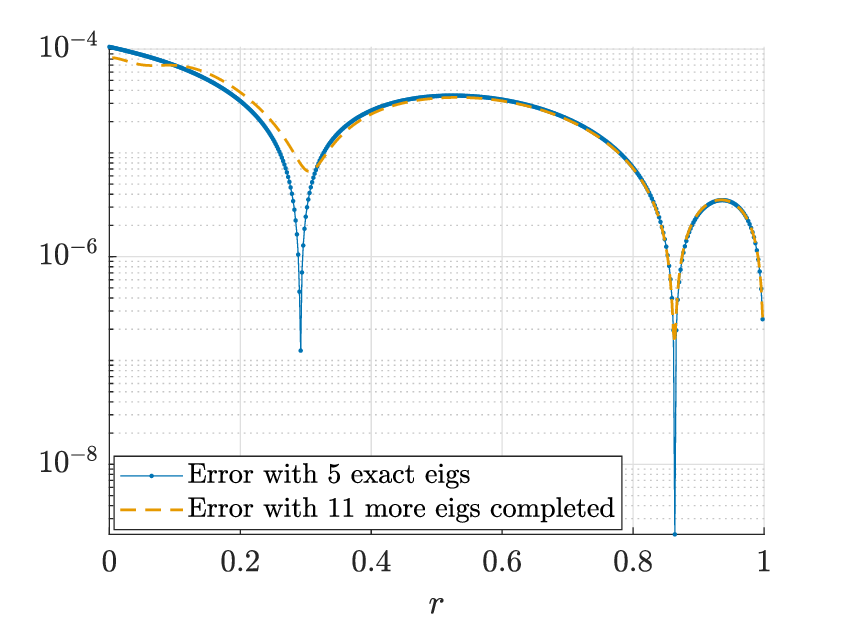}
         \end{minipage}
        \caption{ Spectrum completion from 5 real eigenvalues and exact $\delta$  (left), and absolute error of the reconstruction of $n(r)=16/\left((r+1)(3-r)\right)^{2}$ from $5$ real eigenvalues plus $11$  completed eigenvalues (right) of Example \ref{exam12}.}
    \label{CompletionEx1}
\end{figure}

Note that using more input eigenvalues yields a more accurate spectrum completion, allowing us to complete complex eigenvalues with even higher magnitudes than those presented in Figures \ref{completion_ex12} and \ref{CompletionEx1}. 

\end{example}
 \begin{example} \label{exam13}
We now study the spectrum completion corresponding to the refractive index of Example \ref{exam3} and its application for solving the inverse problem. The input data are the first 30 nonzero eigenvalues of the set computed in Example \ref{exam3}, and $\delta$ approximated as presented in the first column of Table \ref{table:5}. With these data, $140$ more transmission eigenvalues are located as presented in Figure \ref{deltaNsbf_ex13}.
\begin{figure}[H]
        \centering
        \includegraphics[scale=0.55]{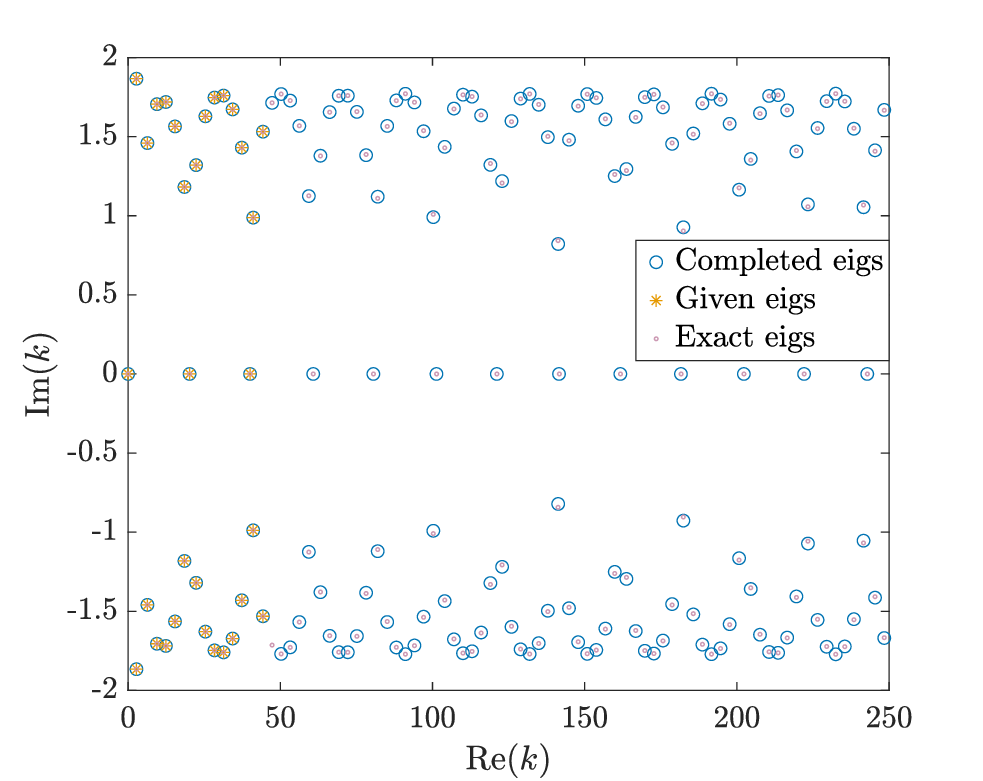}
    \caption{Spectrum completion from 30 eigenvalues of Example \ref{exam13}.}
    \label{deltaNsbf_ex13}
\end{figure}
Moreover, the completed spectrum is used to solve the inverse problem. The corresponding reconstructions and their absolute errors are shown in Figure \ref{invexam13}. The refractive index is recovered by using the $30$ given plus $70$ more completed eigenvalues. We observe that using the completed spectrum yields a more accurate reconstruction.

\begin{figure}[H]
\begin{minipage}{0.49\textwidth}
        \centering
        \includegraphics[scale=0.6]{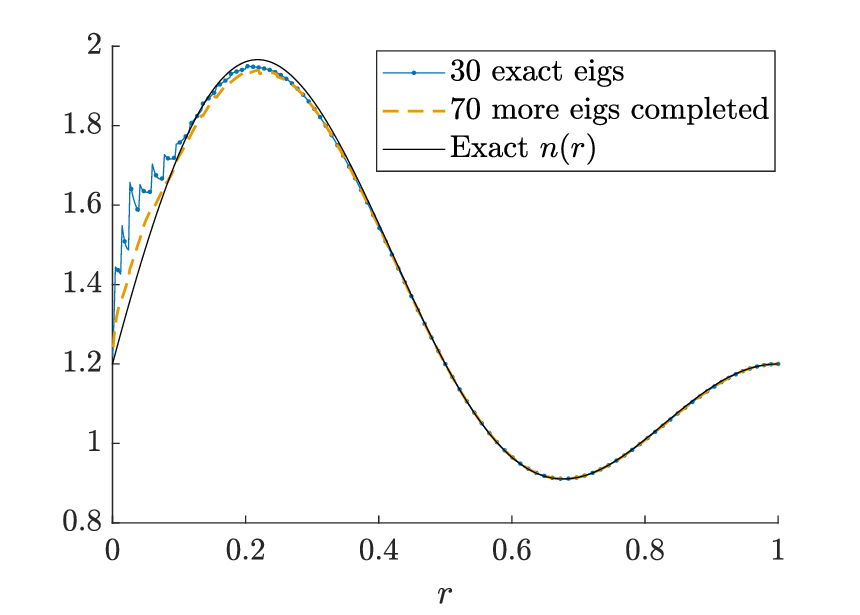}
    \end{minipage}
    \hfill
        \begin{minipage}{0.49\textwidth}
        \centering
        \includegraphics[scale=0.6]{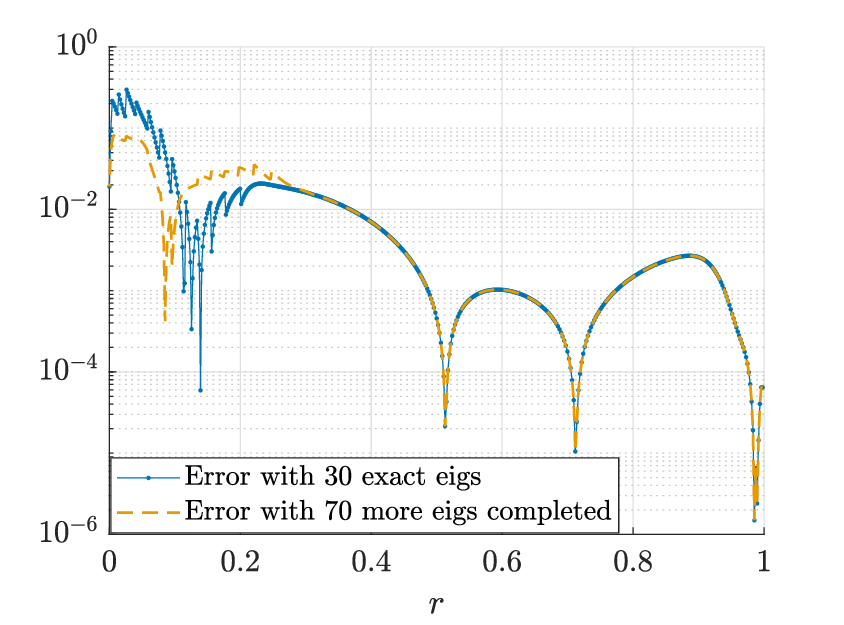}
         \end{minipage}
        \caption{ Recovered refractive index $n(r)=1.2 + (1 - r)\sin(2\pi r)$ from $30$ eigenvalues plus $70$  completed eigenvalues  (left) and absolute error of the reconstruction (right) of Example \ref{exam13}.}
    \label{invexam13}
\end{figure}
\end{example}

\section{Discussion and summary} \label{sec_fin}

In this paper, we have introduced a novel NSBF-based approach for both the direct and inverse TEP in the spherically symmetric setting,  addressing the absence of dedicated numerical methods for both real and complex spectrum in the existing TEP literature. 

By expanding the characteristic function of the transformed Sturm–Liouville problem in NSBF, the direct TEP reduces to the computation of a small set of NSBF coefficients, followed by the root‑finding of a truncated NSBF partial sum (Algorithm~\ref{algo_dir}). This methodology provides accurate results for both real and complex eigenvalues, as illustrated in Section \ref{sec_exampl_dir}.  

The inverse problem was formulated as a two‑stage procedure.  First, we recovered the unknown interval length $\delta$ directly from transmission eigenvalues by introducing a new NSBF methodology (Algorithm~\ref{algo_delta}).  Second, with $\delta$ determined, the refractive index  $n(r)$ is reconstructed by solving a linear system for the first NSBF coefficients (Algorithm~\ref{algo_inv}).  Numerical experiments in Section \ref{sec_exampl_inv} demonstrate that $\delta$ can be estimated with very high accuracy using a few eigenvalues. The resulting reconstructions of $n(r)$ exhibit low errors across all studied cases, including constant, variable, monotonic, or even oscillatory refractive indices, with no a priori assumptions on the sign of $1-n(r)$ or on the value of $\delta$. We observed that increasing the number of input eigenvalues (beyond a minimum threshold) does not always lead to better reconstructions of the refractive index. This is a direct consequence of requiring a larger number of NSBF coefficients $N$, which leads to bigger and therefore more unstable systems to solve. Nonetheless, the method remains robust across all examples. Finally, while we obtained the optimal number of coefficients $N$ using (\ref{IndDirect}), alternative criteria such as those in Remark~\ref{RemarkEpsilon} could similarly be applied.

Furthermore, we have extended our NSBF framework to spectrum completion for the TEP (Algorithm \ref{algo_sc}), enabling the recovery of a larger spectrum from only a few real and/or complex input eigenvalues. Our numerical examples demonstrate that we can complete the spectrum accurately, and when these spectra are feed into the inverse problem algorithms, the reconstructed refractive indices may exhibit improved accuracy. This spectrum completion thus offers a practical tool for applications constrained to limited measurements, broadening the applicability of our direct and inverse TEP methodology.

Moreover, additional numerical experiments covering a wider range of refractive indices, alongside with algorithmic refinements and optimization strategies, would be valuable to further validate and enhance the performance of our NSBF-based approach. A rigorous convergence and stability analysis, while of interest, is beyond the scope of the present work and will be pursued in subsequent studies.
Future work could explore the extension of the proposed NSBF methodology to other classes of spherically symmetric direct and inverse transmission eigenvalue problems. These include the problem without the assumption of axially symmetric eigenfunctions (that is, for angular numbers $l\geq 1$) \cite{CCG, XYX}, the discontinuous TEP \cite{GP}, the problem involving a complex-valued refractive index that corresponds to absorbing medium \cite{CCHabs}, the anisotropic TEP with cavity \cite{KirschAsatryan}, and the recently introduced modified TEP \cite{GPS2}. These directions, among others, could further extend the applicability and impact of the proposed approach.\\

\textbf{CRediT authorship contribution statement} \\ 
\textbf{Vladislav V. Kravchenko:} Conceptualization, Formal analysis, Investigation, Methodology, Software, Supervision, Validation, Writing – original draft, Writing – review and editing, \textbf{L. Estefania Murcia-Lozano:} Conceptualization, Data curation, Formal analysis, Investigation, Methodology, Software, Validation, Visualization, Writing – original draft, Writing – review and editing, 
\textbf{Nikolaos Pallikarakis} Conceptualization, Data curation, Formal analysis, Investigation, Methodology,  Software, Validation, Visualization, Writing – original draft, Writing – review and editing. \\

\textbf{Declaration of competing interest} \\ 
The authors declare that they have no known competing financial interests or personal relationships that could have appeared to
influence the work reported in this paper. 
\\

\textbf{Funding} \\ 
Research of V. V. Kravchenko was supported by CONAHCYT, Mexico, via the grant "Ciencia de Frontera" FORDECYT - PRONACES/ 61517/ 2020. Research of L. Estefania Murcia-Lozano was supported by  Regional Mathematical Center of Southern Federal
University under the program of the Ministry of Education and Science of Russia, agreement No.
075-02-2025-1720.\\

\textbf{Data availability} \\ 
Data will be made available on request.

\bibliographystyle{plain}  

\bibliography{biblio}

\end{document}